\numberwithin{equation}{section}%
\declaretheorem[style=theorem,sibling=equation]{theorem}%
\declaretheorem[style=theorem,numbered=no,name=Theorem]{theorem*}%
\declaretheorem[style=theorem,sibling=equation]{lemma}%
\declaretheorem[style=theorem,sibling=equation]{proposition}%
\declaretheorem[style=theorem,sibling=equation]{corollary}%
\declaretheorem[style=definition,sibling=equation]{setup}%
\declaretheorem[style=definition,sibling=equation]{notation}%
\declaretheorem[style=definition,sibling=equation]{definition}%
\declaretheorem[style=definition,sibling=equation,name={Definition-Proposition}]{defprop}%
\declaretheorem[style=definition,numbered=no]{conventions}%
\declaretheorem[style=remark,sibling=equation]{remark}%
\declaretheorem[style=remark,sibling=equation]{example}%
\newcommand{\kk}{\mathbf{k}}%
\newcommand{\ZZ}{\mathbb{Z}}%
\newcommand{\noloc}{\rotatebox[origin=c]{180}{$\colon$}}%
\newcommand{\op}{\mathrm{op}}%
\NewDocumentCommand{\id}{O{}}{\mathrm{id}_{#1}}%
\NewDocumentCommand{\set}{mo}{\left\{#1\IfValueT{#2}{\,\middle|\,}#2\right\}}%
\newcommand{\GrothendieckUniverse}[1]{\mathbb{#1}}%
\newcommand{\UU}{\GrothendieckUniverse{U}}%
\newcommand{\VV}{\GrothendieckUniverse{V}}%
\newcommand{\WW}{\GrothendieckUniverse{W}}%
\NewDocumentCommand{\dgMod}{s O{} m}{\IfBooleanTF{#1}{\operatorname{dgMod}}{\mathbf{dgMod}}\!\left(#3\right)_{#2}}%
\NewDocumentCommand{\Mod}{s m}{\IfBooleanTF{#1}{#2\text{-}\operatorname{Mod}}{\operatorname{Mod}\!\left(#2\right)}}%
\NewDocumentCommand{\sMod}{s m}{\underline{\operatorname{Mod}}\!\left(#2\right)}%
\NewDocumentCommand{\Proj}{s m}{\operatorname{Proj}\!\left(#2\right)}%
\NewDocumentCommand{\Inj}{s m}{\operatorname{Inj}\!\left(#2\right)}%
\NewDocumentCommand{\stable}{m O{}}{\underline{#1}_{#2}}%
\NewDocumentCommand{\Hom}{O{}mm}{\operatorname{Hom}_{#1}\!\left(#2,#3\right)}%
\NewDocumentCommand{\RHom}{O{}mm}{\mathbb{R}\mathbf{Hom}_{#1}\!\left(#2,#3\right)}%
\NewDocumentCommand{\dgHom}{O{}mmO{\bullet}}{\mathbf{Hom}_{#1}^{#4}\!\left(#2,#3\right)}%
\NewDocumentCommand{\GdgHom}{O{}mmO{\bullet}}{\underline{\mathbf{Hom}}_{#1}^{#4}\!\left(#2,#3\right)}%
\NewDocumentCommand{\sHom}{O{}mm}{\underline{\operatorname{Hom}}_{#1}\!\left(#2,#3\right)}%
\NewDocumentCommand{\iHom}{O{}mm}{[#1]\!\left(#2,#3\right)}%
\NewDocumentCommand{\Lotimes}{sO{}}{\IfBooleanTF{#1}{\otimes}{\otimes^{\mathbb{L}}}_{#2}}%
\NewDocumentCommand{\dgFun}{O{\kk}mm}{\mathbf{dgFun}\!\left(#2,#3\right)}%
\NewDocumentCommand{\Fun}{O{}mm}{\operatorname{Fun}_{#1}\!\left(#2,#3\right)}%
\NewDocumentCommand{\REnd}{O{}m}{\mathbb{R}\mathbf{End}_{#1}\!\left(#2\right)}%
\NewDocumentCommand{\End}{O{}m}{\operatorname{End}_{#1}\!\left(#2\right)}%
\NewDocumentCommand{\DerCat}{d<>m}{\operatorname{D}\IfValueT{#1}{_{#1}}\!\left(#2\right)}%
\NewDocumentCommand{\dgDerCat}{d<>m}{\mathbf{D}\IfValueT{#1}{_{#1}}\!\left(#2\right)}%
\NewDocumentCommand{\dgCh}{O{}m}{\mathbf{Ch}_{#1}\!\left(#2\right)}%
\newcommand{\dgCat}{\operatorname{dgCat}}%
\newcommand{\dgcat}{\operatorname{dgcat}}%
\newcommand{\Hqe}{\operatorname{Hqe}}%
\newcommand{\Hmo}{\operatorname{Hmo}}%
\newcommand{\QQ}{\mathcal{Q}}%
\NewDocumentCommand{\Suspension}{d<>O{}m}{\Sigma\IfValueT{#1}{_{#1}}^{#2}\!\left(#3\right)}%
\NewDocumentCommand{\Loops}{d<>O{}m}{\Omega\IfValueT{#1}{_{#1}}^{#2}\!\left(#3\right)}%
\newcommand{\dgcategory}[1]{\mathcal{#1}}%
\newcommand{\A}{\dgcategory{A}}%
\newcommand{\B}{\dgcategory{B}}%
\newcommand{\C}{\dgcategory{C}}%
\newcommand{\D}{\dgcategory{D}}%
\newcommand{\G}{\dgcategory{G}}%
\newcommand{\T}{\dgcategory{T}}%
\newcommand{\category}[1]{\mathcal{#1}}%
\newcommand{\E}{\category{E}}%
\renewcommand{\S}{\category{S}}%
\newcommand{\h}{\operatorname{h}}%
\RenewDocumentCommand{\H}{O{\bullet}m}{\operatorname{H}^{#1}\!\left(#2\right)}%
\NewDocumentCommand{\Z}{O{\bullet}m}{\operatorname{Z}^{#1}\!\left(#2\right)}%
\NewDocumentCommand{\BH}{O{\bullet}m}{\operatorname{B}^{#1}\!\left(#2\right)}%
\RenewDocumentCommand{\P}{mO{\bullet}}{P_{#1}^{#2}}%
\NewDocumentCommand{\dgP}{mO{\bullet}}{\mathcal{P}_{#1}}%
\NewDocumentCommand{\dgG}{mO{\bullet}}{\mathcal{G}_{#1}}%
\newcommand{\acycl}{{\text{acycl}}}%
\NewDocumentCommand{\infDerCat}{d<>m}{\mathbb{D}\IfValueT{#1}{_{#1}}\!\left(#2\right)}%
\NewDocumentCommand{\LTP}{mO{}m}{{#1}\boxtimes_{#2}{#3}}
\begin{document}%

\title[$Q$-shaped derived categories as derived categories of dg bimodules]%
{$Q$-shaped derived categories as derived categories of differential graded
  bimodules}%

\author[G.~Jasso]{Gustavo Jasso}%

\address{%
  Mathematisches Institut, Universität zu Köln, Weyertal 86-90, 50931 Köln,
  Germany}%
\email{gjasso@math.uni-koeln.de}%
\urladdr{https://gustavo.jasso.info}%

\keywords{$Q$-shaped derived categories; differential graded bimodules;
  differential graded categories}%

\subjclass[2020]{Primary: 18G80. Secondary: 18G35}%

\begin{abstract}
We prove that, under mild assumptions, the $Q$-shaped derived categories
  introduced by Holm and J{\o}rgensen are equivalent to derived categories of
  differential graded bimodules over differential graded categories. This yields new derived invariance results for $Q$-shaped derived
  categories that allow us to extend known descriptions of such categories as
  derived categories of differential graded bimodules over (possibly graded)
  algebras.
\end{abstract}

\maketitle

\setcounter{tocdepth}{1}
\tableofcontents

\section{Introduction}

$Q$-shaped derived categories are recent generalisations of derived categories
of algebras introduced by Holm and J{\o}rgensen in~\cite{HJ22} based on ideas of
Iyama and Minamoto~\cite{IM15}. In this generalisation, cochain complexes of
modules over an algebra $A$ are replaced by `$Q$-shaped' analogues, where $Q$ is
a small category satisfying suitable assumptions, see~\cite{HJ24b} for a recent
survey and~\cite{DSS17,EEG08,GH10,HJ19a} for previous related work. The
$Q$-shaped derived category $\DerCat<Q>{A}$ is an algebraic triangulated
category with small coproducts and, moreover, it is known to be compactly
generated~\cite{HJ24}. Therefore, by Keller's Recognition Theorem~\cite{Kel94},
the $Q$-shaped derived category $\DerCat<Q>{A}$ is equivalent as a triangulated
category to the usual derived category $\DerCat{\A}$ of a small differential
graded category $\A$. On the other hand, for certain choices of $Q$, it is known
that there is an equivalence of triangulated categories of the form
\[
  \DerCat<Q>{A}\simeq\DerCat{A\otimes\Gamma}
\]
where $\Gamma$ is the (possibly graded) endomorphism algebra of a well-chosen
generator (for example a tilting object or an $m$-periodic tilting object) of
the $Q$-shaped derived category of the ground commutative
ring~\cite{GHJS24,IKM17,Sai23}. One reason why the latter descriptions are not
immediately apparent is that the $Q$-shaped derived category is defined as a
localisation of the category of $(A\otimes Q)$-modules, rather than as a
localisation of the category of cochain complexes thereof. The main result in
this article is that, under mild assumptions on $A$, such a description is
available for arbitrary $Q$ if one considers bimodules over differential graded
categories.

\begin{theorem}
  \label{thm:Q-shaped-bimodules} Let $\kk$ be a hereditary commutative
  noetherian ring (e.g.~$\kk=\ZZ$ or $\kk$ is a field) and $Q$ a small
  $\kk$-category satisfying the assumptions in~\Cref{setup:Q-shaped-dercats}.
  Let $A$ be a $\kk$-algebra whose underlying $\kk$-module is finitely generated
  or flat (the latter condition is always satisfied when $\kk$ is a field).
  Then, there exists a differential graded category $\dgP{S_Q(\kk)}$ --- that
  depends only on $\kk$ and $Q$ --- and an equivalence of (dg enhanced)
  triangulated categories
  \[
    \DerCat<Q>{A}\simeq\DerCat{A\otimes\dgP{S_Q(\kk)}}
  \]
  between the $Q$-shaped derived category of $A$ and the derived category of
  differential graded $(A\otimes\dgP{S_Q(\kk)})$-modules.
\end{theorem}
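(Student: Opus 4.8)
The plan is to produce a set of compact generators of $\DerCat<Q>{A}$ whose differential graded endomorphism category is visibly $A\otimes\dgP{S_Q(\kk)}$, and then to appeal to Keller's Recognition Theorem~\cite{Kel94}. Recall that $\DerCat<Q>{A}$ is the stable category of a Frobenius exact structure on $\Mod{A\otimes Q}$ whose projective-injective objects are the $Q$-acyclic modules; being the stable category of a Frobenius category it is algebraic and carries a canonical differential graded enhancement $\dgDerCat<Q>{A}$, and I would carry out every construction inside this enhancement so that the resulting equivalence is automatically one of differential graded enhanced triangulated categories. The candidate generators are the stalk modules: for each object $q$ of $Q$ let $S_q^A$ be the $(A\otimes Q)$-module whose value at $q$ is $A$ and whose other values vanish, so that $S_q^A=A\Lotimes*[\kk]S_q^\kk$; for the category $Q$ governing ordinary cochain complexes these are precisely the stalk complexes, whose $\kk$-linear endomorphism algebra is $\kk$.

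I would first treat the case of the ground ring. The representable modules are among the $Q$-acyclic (projective-injective) objects, hence vanish in $\DerCat<Q>{\kk}$; combined with the compact generation of $\DerCat<Q>{\kk}$ established in~\cite{HJ24}, this lets one check that the family $\{S_q^\kk\}_{q\in Q}$ consists of compact generators, their compactness coming from the fact that, under the assumptions in~\Cref{setup:Q-shaped-dercats}, each stalk admits a complete resolution by finite sums of representables. I would then \emph{define} $\dgP{S_Q(\kk)}$ to be the full differential graded subcategory of $\dgDerCat<Q>{\kk}$ on the objects $S_q^\kk$, with morphism complexes computed explicitly as $\Hom[Q]{P_\bullet}{S_{q'}^\kk}$ from a complete resolution $P_\bullet$ of $S_q^\kk$ by representable modules; Keller's theorem then yields $\DerCat<Q>{\kk}\simeq\DerCat{\dgP{S_Q(\kk)}}$, and by construction $\dgP{S_Q(\kk)}$ depends only on $\kk$ and $Q$.

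To pass to $A$ I would use the extension-of-scalars functor $A\Lotimes*[\kk]-\colon\Mod{Q}\to\Mod{A\otimes Q}$, which sends $S_q^\kk$ to $S_q^A$. The decisive computation is that, whenever the complete resolution $P_\bullet$ remains a complete resolution of $S_q^A$ after applying $A\Lotimes*[\kk]-$, the extension-of-scalars/restriction adjunction together with the Yoneda lemma gives canonical isomorphisms
\[
  \RHom[\DerCat<Q>{A}]{S_q^A}{S_{q'}^A}
  \;\cong\;
  \Hom[A\otimes Q]{A\Lotimes*[\kk]P_\bullet}{A\Lotimes*[\kk]S_{q'}^\kk}
  \;\cong\;
  A\Lotimes*[\kk]\Hom[Q]{P_\bullet}{S_{q'}^\kk},
\]
the right-hand side being exactly the morphism complex of $A\otimes\dgP{S_Q(\kk)}$. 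Assembling these over all $q,q'$ and using that $\dgDerCat<Q>{A}$ is $A$-linear produces an $A$-linear differential graded functor $A\otimes\dgP{S_Q(\kk)}\to\dgDerCat<Q>{A}$, $q\mapsto S_q^A$, that is quasi-fully faithful; since its image is a set of compact generators of $\DerCat<Q>{A}$, Keller's Recognition Theorem upgrades it to the asserted equivalence $\DerCat<Q>{A}\simeq\DerCat{A\otimes\dgP{S_Q(\kk)}}$.

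The main obstacle is the hypothesis in the displayed computation, namely that $A\Lotimes*[\kk]-$ carries the complete resolution $P_\bullet$ of $S_q^\kk$ to a complete resolution of $S_q^A$, and this is precisely where the assumptions on $\kk$ and $A$ enter. When $A$ is flat over $\kk$ the functor $A\Lotimes*[\kk]-$ is exact and there is nothing to check; when $A$ is merely finitely generated one instead invokes the hereditary and noetherian hypotheses on $\kk$, which force the syzygies of $S_q^\kk$ in a resolution by finite sums of representables to be finitely generated projective $\kk$-modules, so that $A\Lotimes*[\kk]-$ again preserves exactness. The remaining delicate point is to verify that these isomorphisms are compatible with compositions, so that the comparison functor is a genuine quasi-fully-faithful differential graded functor and not merely a quasi-isomorphism on each morphism complex; I would secure this by working throughout with the fixed resolutions $P_\bullet$, for which extension of scalars commutes strictly with the composition maps defining the differential graded structures.
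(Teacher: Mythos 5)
Your outline matches the paper's strategy (stalk generators, the complete resolutions from \cite{HJ24} with finitely generated projective components, Keller's Recognition Theorem, and a comparison dg functor out of $A\otimes\dgP{S_Q(\kk)}$), but there is a genuine gap at the decisive step. Your definition of $\dgP{S_Q(\kk)}$ --- ``morphism complexes computed explicitly as $\Hom[Q]{P_\bullet}{S_{q'}}$'' --- does not define a dg category: there is no strict composition law on Hom-to-stalk complexes, since composing a map $P_\bullet\to S_{q'}$ with a map $P'_\bullet\to S_{q''}$ requires lifting the former to a chain map $P_\bullet\to P'_\bullet$, and such lifts exist only up to homotopy. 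The dg category must be defined, as in the paper, as the full dg subcategory of acyclic complexes of projectives spanned by the resolutions themselves, with morphism complexes $\dgHom[Q]{\P{S_q}}{\P{S_{q'}}}$ \emph{between} resolutions. Once this is corrected, your claim that $A\otimes\Hom[Q]{P_\bullet}{S_{q'}}$ ``is exactly the morphism complex of $A\otimes\dgP{S_Q(\kk)}$'' is false: that morphism complex is $A\otimes\dgHom[Q]{\P{S_q}}{\P{S_{q'}}}$, whose degree-$i$ component is $A\otimes\prod_{j\in\ZZ}\Hom[Q]{\P{S_q}[j]}{\P{S_{q'}}[i+j]}$ --- an \emph{infinite product} appears because the resolutions are unbounded --- and it is only quasi-isomorphic to the Hom-to-stalk complex (via \Cref{lemma:psi}, and only after tensoring when $A$ is flat), not equal to it.

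This infinite product is precisely the point you never confront, and it is where the flat/finitely generated dichotomy actually lives. The issue you do identify --- whether $A\otimes-$ carries $\P{S_q}$ to a complete resolution of $A\otimes S_q$ --- is a non-issue: by \cite{HJ24} the syzygy sequences of $\P{S_q}$ are object-wise split, so they are preserved by $A\otimes-$ for \emph{arbitrary} $A$, with no hypotheses at all. The real problem is that the canonical map $A\otimes\prod_{j}\Hom[Q]{\P{S_q}[j]}{\P{S_{q'}}[i+j]}\to\prod_{j}A\otimes\Hom[Q]{\P{S_q}[j]}{\P{S_{q'}}[i+j]}$ need not be an isomorphism. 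The paper's proof splits accordingly: when $A$ is finitely generated over the noetherian ring $\kk$, the functor $A\otimes-$ preserves infinite products and the comparison dg functor $\varphi$ is an \emph{isomorphism} of dg categories; when $A$ is flat, $\varphi$ is shown to be a quasi-isomorphism by fitting it into a commutative square whose other sides are $A\otimes\psi^Q$ (a quasi-isomorphism by flatness) and $\psi^{A\otimes Q}$ (a quasi-isomorphism by \Cref{lemma:psi}), both with target the Hom-to-stalk complex $A\otimes\Hom[Q]{\P{S_q}}{S_{q'}}$ --- this square is the rigorous form of your ``decisive computation,'' including the composition compatibility you defer to the end. Note also that your route through Hom-to-stalk complexes cannot handle the finitely generated non-flat case (e.g.\ $\kk=\ZZ$, $A=\ZZ/n$), since there $A\otimes\psi^Q$ need not be a quasi-isomorphism; this is exactly why the paper proves the stronger isomorphism statement in that case. (A smaller slip: the Frobenius category underlying $\DerCat<Q>{A}$ is the category ${}^{\perp_1}\E$ of semi-projective modules, whose projective-injective objects are the projective $(A\otimes Q)$-modules, not all of $\Mod{A\otimes Q}$ with ``$Q$-acyclic'' projective-injectives.)
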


The strategy to prove \Cref{thm:Q-shaped-bimodules} is straightforward: The
$Q$-shaped derived category $\DerCat<Q>{A}$ is compactly generated with a
distinguished set of compact generators $S_Q(A)$~\cite[Thm.~D]{HJ24}. Moreover,
by construction $\DerCat<Q>{A}$ admits a canonical Frobenius model, hence the
subcategory $S_Q(A)\subseteq\DerCat<Q>{A}$ can be enhanced to a differential
graded category $\dgP{S_Q(A)}$ whose derived category is equivalent to
$\DerCat<Q>{A}$ as a (dg enhanced) triangulated category~\cite[Sec.~4.3]{Kel94}.
Our proof of \Cref{thm:Q-shaped-bimodules} has formal similarities with the
proof of~\cite[Thm.~A]{GHJS24} (see \Cref{lemma:psi,lemma:rho} and
\Cref{rmk:proof-of-main-thm}); it amounts to constructing a dg functor
(\Cref{prop:the-dg-functor})
\[
  A\otimes\dgP{S_Q(\kk)}\longrightarrow\dgP{S_Q(A)}
\]
that is an isomorphism when the underlying $\kk$-module of $A$ is finitely
generated and a quasi-isomorphism when it is flat. An important component behind
the proof is the existence of well-behaved complete projective resolutions of
the members of the distinguished set of compact generators
$S_Q(\kk)\subseteq\DerCat<Q>{\kk}$~\cite[Sec.~5]{HJ24}.

After recalling the necessary preliminaries in \Cref{sec:preliminaries}, the
proof of \Cref{thm:Q-shaped-bimodules} is given in \Cref{sec:main-result}, where
we also explain some of its consequences. Notably, under the flatness assumption
(\Cref{setup:Q-shaped-dercats-flat}), we prove an invariance result
(\Cref{thm:Q-shaped-derived-invariance}) that has several interesting
consequences, some of which we now highlight. It is also worth mentioning that,
in establishing all of these consequences of \Cref{thm:Q-shaped-bimodules}, we
leverage the functoriality properties of To{\"e}n's internal
$\operatorname{Hom}$ functor for the homotopy category of dg categories, as well
as the validity of the derived Eilenberg--Watts Theorem therein~\cite{Toe07}
(these are also recalled in \Cref{sec:preliminaries}). Our main result also has
an interpretation in terms of Lurie's tensor product of $\kk$-linear presentable
stable $\infty$-categories, see~\Cref{rmk:Q-shaped-LuriesTP}.

The first consequence is that the formation of $Q$-shaped derived categories
(for fixed $Q$) preserves derived equivalences of algebras.

\begin{theorem*}[{\Cref{coro:Q-shaped-derived-invariance-fixed-Q}}]
  Let $Q$ be a small $\kk$-category and $A_1$ and $A_2$ two $\kk$-algebras, and
  assume that these satisfy the assumptions in
  \Cref{setup:Q-shaped-dercats-flat}. Suppose that $A_1$ and $A_2$ are derived
  equivalent (in the usual sense). Then, there is an equivalence of (dg
  enhanced) triangulated categories
  \[
    \DerCat<Q>{A_1}\simeq\DerCat<Q>{A_2}.
  \]
\end{theorem*}

The second consequence that we wish to highlight provides a means to obtain
equivalences between $Q$-shaped derived categories for a fixed algebra $A$ and
different choices of $Q$ (see~\Cref{ex:repetitive} for instances of this
phenomenon stemming from Happel's Theorem).

\begin{theorem*}[{\Cref{coro:Q-shaped-derived-invariance-fixed-A}}]
  Let $Q_1$ and $Q_2$ be small $\kk$-categories and $A$ a $\kk$-algebra, and
  assume that these satisfy the assumptions in
  \Cref{setup:Q-shaped-dercats-flat}. Suppose that there is a equivalence of dg
  enhanced triangulated categories
  \[
    \DerCat<Q_1>{\kk}\simeq\DerCat<Q_2>{\kk}.
  \]
  Then, there is an equivalence of (dg enhanced) triangulated categories
  \[
    \DerCat<Q_1>{A}\simeq\DerCat<Q_2>{A}.
  \]
\end{theorem*}

The latter theorem can be thought of as a `$Q$-shaped analogue' of the universal
derived equivalences for posets and small categories investigated
in~\cite{DJW21,GS16,GS16a,GS18a,GS18b,Lad08,Lad08a} among others, see also
\Cref{rmk:universal-derived-equivalences}.

We also show that different choices of compact generators of $\DerCat<Q>{\kk}$
yield alternative compact generators of $\DerCat<Q>{A}$
(\Cref{thm:independence-of-generators}); in particular, silting or tilting
subcategories of $\DerCat<Q>{\kk}$ induce generating subcategories of
$\DerCat<Q>{A}$ of the same kind
(\Cref{thm:silting-generators,thm:tilting-generators}). These results extend
known descriptions of $Q$-shaped derived categories, see
\Cref{ex:m-periodic,ex:IKM,ex:GHJS} for details.

\begin{conventions}
  Throughout the article we work over a fixed commutative ring $\kk$. We fix
  Grothendieck universes $\kk\in\UU\in\VV\in\WW\in\cdots$ that satisfy the
  infinite axiom, and whose members we refer to as the \emph{small},
  \emph{large} and \emph{very large} sets, \ldots\
  respectively.\footnote{Set-theoretic issues arise since we consider large
    derived dg categories of small dg categories, and in particular since we
    need to consider To{\"e}n's internal $\operatorname{Hom}$ functor between
    these.} Unless noted otherwise, all categories we consider are
  $\kk$-categories (=categories enriched in $\kk$-modules) and all functors are
  $\kk$-linear. In all contexts, undecorated tensor products are understood to
  be taken over $\kk$, that is $\otimes=\otimes_\kk$. All modules are right
  modules.
\end{conventions}

\section{Preliminaries}
\label{sec:preliminaries}

\subsection{Differential graded categories and their derived categories}

We use the theory of differential graded categories and their derived categories
throughout the article; our main references are \cite{Kel94,Kel06}.

\subsubsection{Differential graded $\kk$-modules}
\label{subsec:dg-k-modules}

We denote the category of \emph{differential graded (=dg) $\kk$-modules} (equivalently,
cochain complexes of $\kk$-modules) by $\dgMod*{\kk}$. Thus, the objects of
$\dgMod*{\kk}$ are pairs $(V,d_V)$ consisting of a graded $\kk$-module
\[
  V=\coprod_{i\in\ZZ}V^i\qquad d_V(V^i)\subseteq V^{i+1}
\]
and a degree $1$ morphism ${d_V\colon V\to V}$ such that $d_V\circ d_V=0$. A
\emph{morphism of dg $\kk$-modules} ${f\colon (V,d_V)\to(W,d_W)}$ is a (degree 0)
morphism $f\colon V\to W$ of graded
$\kk$-modules, $f(V^i)\subseteq W^i$, that commutes with the differentials: $d_W\circ f=f\circ d_V$.

Recall that the tensor product $V\otimes W$ of two dg $\kk$-modules ${V=(V,d_V)}$
and ${W=(W,d_W)}$ is the graded $\kk$-module whose homogeneous component of degree
$i$ is the $\kk$-module
\begin{equation}
  \label{eq:tensor-product-dg-k-mods}
  (V\otimes W)^i\coloneqq\coprod_{i=j+k}V^j\otimes W^k
\end{equation}
and which is equipped with the differential
\[
  d_{V\otimes W}\coloneqq d_V\otimes\id[W]+\id[V]\otimes d_W,
\]
that is
\begin{equation}
  \label{eq:diff-tensor-product-dgkmods}
  d_{V\otimes W}(v\otimes w)=d_V(v)\otimes w+(-1)^{j}v\otimes d_W(w),\qquad v\in
  V^j.
\end{equation}

\begin{example}
  \label{ex:tensor-product-0}
  If $V=V^0$ is concentrated in degree $0$ (hence necessarily $d_V=0$) and $W$
  is arbitrary, then
  \[
    V\otimes W=\coprod_{i\in\ZZ}V\otimes W^i,\qquad d_{V\otimes W}(v\otimes w)=
    v \otimes d_W(w).
  \]
\end{example}

The tensor product of dg $\kk$-modules endows $\dgMod*{\kk}$ with a symmetric
monoidal structure with symmetry constraint given by the Koszul sign rule
\[
  V\otimes W\stackrel{\sim}{\longrightarrow}W\otimes V,\qquad v\otimes
  w\longmapsto(-1)^{ij}w\otimes v,
\]
where $v\in V^i$ and $w\in W^j$. This monoidal structure is
closed~\cite[Def.~3.3.6]{Rie14} with the following internal $\operatorname{Hom}$
functor~\cite[Sec.~2.1]{Kel06}: Given dg $\kk$-modules $V=(V,d_V)$ and
$W=(W,d_W)$, the dg $\kk$-module of morphisms between them is the graded
$\kk$-module
\[
  \dgHom[\kk]{V}{W}\coloneqq\coprod_{i\in\ZZ}\dgHom[\kk]{V}{W}[i]
\]
whose homogeneous component of degree $i$ is the $\kk$-module
\begin{equation}
  \label{eq:culprits}
  \dgHom[\kk]{V}{W}[i]\coloneqq\prod_{j\in\ZZ}\Hom[\kk]{V^j}{W^{i+j}},
\end{equation}
and which is equipped with the differential
\begin{equation}
  \label{eq:partial}
  \partial(f)\coloneqq d_W\circ f-(-1)^{i}f \circ d_V,\qquad
  f\in\dgHom[\kk]{V}{W}[i];
\end{equation}
in coordinates:
\begin{equation}
  \label{eq:partial-coords}
  \partial(f)=(d_W^{i+j}\circ f^j-(-1)^i f^{j+1}\circ d_V^j)_{j\in\ZZ}.
\end{equation}
For example, the $0$-cocycles in $\dgHom[\kk]{V}{W}$ are precisely the morphisms
of dg $\kk$-modules ${V\to W}$, and two $0$-cocycles are cohomologous if and
only if they are cochain-homotopic in the usual sense.

A \emph{differential graded category} is a category enriched in dg
$\kk$-modules~\cite[Sec.~2.2]{Kel06}, and hence all notions of enriched category
theory such as enriched functors, enriched opposite categories, \ldots, apply in
this context (see the standard reference~\cite{Kel05a} or~\cite[Ch.~3]{Rie14}
for a modern account). In slightly more detail, the morphisms between two
objects in a dg category form a dg $\kk$-module and the (graded) composition law
satisfies the (graded) Leibniz rule, corresponding to the fact that it is given
by a morphism of dg $\kk$-modules. Similarly, a dg functor $F\colon\A\to\B$
between dg $\kk$-categories is a $\kk$-linear functor that preserves the grading
and the differentials on the dg $\kk$-modules of morphisms, which is to say that
its components
\[
  F\colon\A(a_1,a_2)\longrightarrow\B(F(a_1),F(a_2)),\qquad a_1,a_2\in\A,
\]
are morphisms of dg $\kk$-modules, etc.

For example, the above internal $\operatorname{Hom}$ allows us to promote
$\dgMod*{\kk}$ to a dg category that we denote by $\dgMod{\kk}$ (compare with
\cite[p.~37]{Rie14}). Notice that the composition of homogeneous morphisms
${f\colon U\to V}$ and ${g\colon V\to W}$ with $f$ of degree $i$ is given by
\begin{equation}
  \label{eq:graded-composition}
  g\circ f=(g^{i+j}\circ f^j)_{j\in\ZZ}.
\end{equation}

\begin{remark}
  In what follows we identify dg algebras with dg categories with a single
  object in the obvious way.
\end{remark}

\begin{remark}
  The above definitions extend verbatim to define the dg category $\dgCh{\C}$ of
  cochain complexes in an additive category $\C$ (which itself need not have a
  tensor product).
\end{remark}

Each dg category $\A$ has an associated \emph{cohomology (graded) category}
$\H{\A}$ with the same objects as $\A$ and whose graded $\kk$-modules of
morphisms
\[
  \H{\A}(x,y)\coloneqq\H{\A(x,y)},\qquad x,y\in\A,
\]
are equipped with the induced composition law. The homogeneous morphisms of
{degree $0$} in $\H{\A}$ span the \emph{$0$-th cohomology category} $\H[0]{\A}$,
which is an ordinary category.\footnote{The category $\H[0]{\A}$ should not be
  confused with the underlying category $\Z[0]{\A}$ of $\A$~\cite[Def.~3.4.5]{Rie14} whose
  morphisms are rather the $0$-cocycles.}

\subsubsection{Differential graded modules over differential graded categories}
\label{subsubsec:dg-modules}

Let $\A$ be a small dg category. We denote the dg category of (right) dg
$\A$-modules by $\dgMod{\A}$. By definition, the objects of $\dgMod{\A}$ are the
dg functors\footnote{We adopt the convention that dg modules over a small dg
  category take values in the dg category $\dgMod[\UU]{\kk}$ of small dg
  $\kk$-modules, and often drop the decoration by the universe from the
  notation.}
\[
  M\colon\A^\op\to\dgMod[\UU]{\kk},\qquad a\longmapsto M_a;
\]
we denote the dg $\kk$-modules of morphisms between dg $\A$-modules by
\[
  \dgHom[\A]{M}{N}=\coprod_{i\in\ZZ}\dgHom[\A]{M}{N}[i],\qquad M,N\in\dgMod{\A},
\]
see~\cite[Digression~7.3.1]{Rie14} for an abstract construction in terms of
enriched ends (the precise formula does not play an explicit role in the
sequel). The dg Yoneda embedding
\[
  \h\colon\A\longrightarrow\dgMod{\A},\qquad a\longmapsto \h_a\coloneqq\A(-,a),
\]
is equipped with canonical isomorphisms of dg $\kk$-modules
\[
  \dgHom[\A]{\h_a}{M}\stackrel{\sim}{\longrightarrow} M_a,\qquad a\in\A,\
  M\in\dgMod{\A},
\]
that are natural in both variables~\cite[Lemma~7.3.5]{Rie14}. In particular,
there are canonical isomorphisms of dg $\kk$-modules
\[
  \dgHom[\A]{\h_a}{\h_b}\stackrel{\sim}{\longrightarrow} \A(a,b),\qquad
  a,b\in\A,
\]
so that we may (and we will) identify the dg category $\A$ with its image under
the dg Yoneda embedding~\cite[Sec.~3.1]{Kel06}.

\subsubsection{The derived category of a differential graded category}
\label{subsubsec:derived-cat}

Let $\A$ be a small dg category. A morphism of dg $\A$-modules $f\colon M\to N$
is a \emph{quasi-isomorphism} if the induced morphisms
\[
  \H{f_a}\colon\H{M_a}\longrightarrow\H{N_a},\qquad a\in\A,
\]
are isomorphisms of graded $\kk$-modules. The \emph{derived dg category of $\A$}
is the full dg subcategory $\dgDerCat{\A}$ of $\dgMod{\A}$ spanned by the
cofibrant dg modules, that is the the dg $\A$-modules $P$ with the following
property: for every object-wise surjective quasi-isomorphism of dg $\A$-modules
$L\to M$, every morphism of dg $\A$-modules $P\to M$ factors through $L$. The
\emph{derived category of $\A$} is the $0$-th cohomology category
\[
  \DerCat{\A}\coloneqq\H[0]{\dgDerCat{\A}},
\]
which is locally small but large. We recall that there are canonical
isomorphisms
\[
  \Hom[\DerCat{\A}]{\h_a}{M[i]}\stackrel{\sim}{\longrightarrow}\H[i]{M_a},\qquad
  M\in\DerCat{\A},
\]
which induce an isomorphism between the graded subcategory of the derived
category spanned by the representable dg $\A$-modules and the cohomology
category $\H{\A}$ of $\A$~\cite[Sec.~3.2]{Kel06}.

The derived category of $\A$ is a triangulated category with small coproducts
and the set
\[
  \set{\h_a\in\DerCat{\A}}[a\in\A]\subseteq\DerCat{\A}
\]
of representable dg $\A$-modules forms a set of compact
generators~\cite[Sec.~3.2--3.5]{Kel06}. The latter statement means that the
following two conditions hold:
\begin{itemize}
\item For each object $a\in\A$ the functor
  \[
    \Hom[\DerCat{\A}]{\h_a}{-}\colon\DerCat{\A}\longrightarrow\Mod{\kk}
  \]
  preserves small coproducts, which by definition means that
  $\h_a\in\DerCat{\A}$ is a \emph{compact object}.
\item A dg $\A$-module $M\in\DerCat{\A}$ is a zero object if and only if
  \[
    \forall a\in\A,\ \forall
    i\in\ZZ,\quad\Hom[\DerCat{\A}]{\h_a}{M[i]}\cong\H[i]{M_a}\stackrel{!}{=}0.
  \]
\end{itemize}
The above definitions generalise in the obvious way to define a set of compact
generators in a triangulated category with small coproducts, see for
example~\cite{Nee01}.

\begin{remark}
  If $\A=A$ is an ordinary algebra (viewed as a dg algebra concentrated in
  degree $0$), then (right) dg $A$-modules identifiy with cochain complexes of
  (right) $A$-modules and $\DerCat{A}$ is the usual derived category of $A$; the
  regular representation of $A$ (viewed as a cochain complex concentrated in
  degree $0$) is the canonical compact generator provided by the dg Yoneda
  embedding.
\end{remark}

\subsection{The homotopy theory of dg categories}

We denote by $\dgCat$ the (locally large) category of large dg categories and dg
functors between them; similarly, we denote by $\dgcat$ its (locally small)
subcategory spanned by the small dg categories.

\subsubsection{The tensor product of dg categories}

The tensor product of two dg categories $\A$ and $\B$ is the dg category
$\A\otimes\B$ with objects the pairs $(a,b)$, $a\in\A$ and $b\in\B$, and whose
dg $\kk$-modules of morphisms
\begin{equation}
  \label{eq:homs-in-tensor-product}
  (\A\otimes\B)((a_1,b_1),(a_2,b_2))\coloneqq\A(a_1,a_2)\otimes\B(b_1,b_2),
\end{equation}
are endowed with the differential $d_\A\otimes 1_\B +1_\A\otimes d_\B$, that is
\begin{equation*}
  \label{eq:diff-tensor-product}
  d(f\otimes g)\coloneqq d_\A(f)\otimes g+(-1)^{i}f\otimes d_\B(g)
\end{equation*}
for a homogeneous morphism $f\in\A(a_1,a_2)^i$ and $g\in\B(b_1,b_2)$ (compare
with~\eqref{eq:diff-tensor-product-dgkmods}). The composition law in
$\A\otimes\B$ is determined by those in $\A$ and $\B$ according the the formula
\begin{equation}
  \label{eq:comp-tensor-product}
  (f_2\otimes g_2)\circ(f_1\otimes g_1)\coloneqq (-1)^{ij}(f_2\circ f_1)\otimes(g_2\circ g_1),
\end{equation}
where $f_1$ is a homogeneous morphism of degree $i$ and $g_2$ is a homogeneous
morphism of degree $j$. The unit of the tensor product of dg categories is the
one-object dg category associated to the ground commutative ring $\kk$.

\begin{example}
  \label{ex:tensor-product-dg-cats-0}
  If $\A=A$ is an ordinary algebra and $\B$ is arbitrary then the dg
  $\kk$-modules of the tensor product $A\otimes\B$ are given by
  \[
    A\otimes\B(b_1,b_2)=\coprod_{i\in\ZZ}A\otimes\B(b_1,b_2)^i,\qquad
    b_1,b_2\in\B,
  \]
  and are endowed with the differential
  \[
    d_{A\otimes\B}(a\otimes f)=a\otimes d_{\B}(f),
  \]
  see~\Cref{ex:tensor-product-0}. Furthermore, no Koszul signs appear in the
  composition law for $A\otimes\B$ since $A$ is concentrated in degree $0$.
\end{example}

The category $\dgCat$ admits a closed symmetric monoidal structure
\[
  (\dgCat,\ \otimes,\ \dgFun{-}{-})
\]
where $(\A,\B)\mapsto\dgFun{\A}{\B}$ denotes the passage to the dg category of
dg functors $\A\to\B$~\cite[Sec.~2.3]{Kel06} (see
also~\cite[Digression~7.3.1]{Rie14}). For example, for a small {dg category $\A$},
\[
  \dgMod{\A}\coloneqq\dgFun{\A^\op}{\dgMod[\UU]{\kk}}
\]
is the dg category of (right) dg $\A$-modules discussed in
\Cref{subsubsec:dg-modules}.

\begin{remark}
  The above closed symmetric monoidal structure restricts to the full
  subcategory $\dgcat\subseteq\dgCat$.
\end{remark}

\subsubsection{Differential graded categories up to quasi-equivalence}
\label{subsec:dg_cats_up_to_quasi-eq}

We say that a dg functor ${F\colon\A\to\B}$ is a \emph{quasi-equivalence} if the
induced functor of graded categories
\[
  \H{F}\colon\H{\A}\longrightarrow\H{\B}
\]
is an equivalence; explicitly, $F$ is a quasi-equivalence
if the functor of ordinary categories
\[
  \H[0]{F}\colon\H[0]{\A}\longrightarrow\H[0]{\B}
\]
is essentially surjective (=dense) and the morphisms of dg $\kk$-modules
\[
  F\colon\A(a_1,a_2)\longrightarrow\B(F(a_1),F(a_2)),\qquad a_1,\ a_2\in\A,
\]
are quasi-isomorphisms. We say that $F$ is a \emph{quasi-isomorphism} if it is
bijective on objects and a quasi-equivalence.

\begin{remark}
  A quasi-equivalence $F\colon\A\to\B$ induces a quasi-equivalence
  \[
    \mathbb{L}F_!\colon\dgDerCat{\A}\longrightarrow\dgDerCat{\B}
  \]
  between the corresponding derived dg categories~\cite[Sec.~3.8]{Kel06}.
\end{remark}

The category $\dgCat$ admits a Quillen model category structure whose weak equivalences are the
quasi-equivalences, which is typically called the Tabuada model
structure~\cite{Tab05a}. We denote the homotopy category of this model structure by\footnote{The membership relation $\UU\in\VV$ induces a
  fully faithful functor $\Hqe_{\UU}\to\Hqe_{\VV}=\Hqe$ that preserves
  Dwyer--Kan mapping spaces up to weak homotopy equivalence,
  see~\cite[p.~624]{Toe07}. Thus, we may regard the homotopy category of small
  dg category as a full subcategory of that of large categories without issue.}
\[
  \Hqe\coloneqq\dgCat[\mathrm{qeq}^{-1}].
\]
Thus, two dg categories are isomorphic in $\Hqe$ if and only if they are
connected by a zig-zag of quasi-equivalences.\footnote{Similar to
  quasi-isomorphisms, a quasi-equivalence need not admit an inverse even up to
  homotopy.} The tensor product with a dg category that is cofibrant in the Tabuada model
structure preserves quasi-equivalences, and hence the localisation $\Hqe$
inherits a symmetric monoidal structure $(\Hqe,\Lotimes)$~\cite[Sec.~4]{Toe07}.

\subsubsection{To{\"e}n's internal $\operatorname{Hom}$-functor}
\label{subsec:Toen}

Remarkably, the symmetric monoidal structure on $\Hqe$ is
closed~\cite[Thm.~6.1]{Toe07}: There exists a bifunctor, \emph{To{\"en}'s
  internal $\operatorname{Hom}$ functor},
\[
  \RHom{-}{-}\colon\Hqe^\op\times\Hqe\longrightarrow\Hqe
\]
equipped with natural bijections
\begin{equation*}
  \Hom[\Hqe]{\A\Lotimes\B}{\C}\stackrel{\sim}{\longrightarrow}\Hom[\Hqe]{\A}{\RHom{\B}{\C}},\qquad\A,\B,\C\in\Hqe,
\end{equation*}
and consequently also natural isomorphisms in $\Hqe$
\begin{equation}
  \label{eq:RHom-adjunction}
  \RHom{\A\Lotimes\B}{\C}\stackrel{\sim}{\longrightarrow}\RHom{\A}{\RHom{\B}{\C}},\qquad\A,\B,\C\in\Hqe,
\end{equation}
see for example~\cite[Rmk.~3.3.9]{Rie14}. In particular, there is a canonical
bijection~\cite[Lemma~3.4.9]{Rie14}
\[
  \Hom[\Hqe]{\kk}{\RHom{\A}{\B}}\stackrel{\sim}{\longleftrightarrow}\Hom[\Hqe]{\A}{\B},
\]
so that the isomorphism classes of objects in $\H[0]{\RHom{\A}{\B}}$ are in
bijection with the set of morphisms from $\A$ to $\B$ in
$\Hqe$~\cite[Cor.~4.9]{Toe07}. Finally, each morphism ${M\in\Hom[\Hqe]{\A}{\B}}$
induces a functor
\[
  F_M\colon\H[0]{\A}\longrightarrow\H[0]{\B}
\]
that is unique up to (non-unique) isomorphism, and which is an equivalence
whenever $M$ is an isomorphism in $\Hqe$, see~\cite[Sec.~2]{Toe07}. For this
reason, the objects of the dg category $\RHom{\A}{\B}$ are sometimes called
\emph{quasi-functors} (see for example~\cite[Sec.~4.1]{Kel06}).

The following properties of To{\"e}n's internal $\operatorname{Hom}$-functor are
of particular interest in the sequel (see~\Cref{subsec:consequences}):
\begin{itemize}
\item \cite[p.~649]{Toe07} There is a canonical isomorphism in $\Hqe$
  \begin{equation}
    \label{eq:dgDerCat-as-Hom}
    \RHom{\A^\op}{\dgDerCat{\kk}}\stackrel{\sim}{\longrightarrow}\dgDerCat{\A},\qquad\A\in\dgcat.
  \end{equation}
\item \cite[Thm.~7.2]{Toe07} Let $\A$ and $\B$ be small dg categories.
  Restriction along the dg Yoneda embedding ${\h\colon\A\to\dgDerCat{\A}}$
  induces an isomorphism in $\Hqe$
  \begin{equation}
    \label{eq:dg-Kans-theorem}
    \h^*\colon\RHom[c]{\dgDerCat{\A}}{\dgDerCat{\B}}\stackrel{\sim}{\longrightarrow}\RHom{\A}{\dgDerCat{\B}},
  \end{equation}
  where
  \[
    \RHom[c]{\dgDerCat{\A}}{\dgDerCat{\B}}\subseteq\RHom{\dgDerCat{\A}}{\dgDerCat{\B}}
  \]
  is the full dg subcategory spanned by the quasi-functors whose induced functor
  $\DerCat{\A}\longrightarrow\DerCat{\B}$ preserves small
  coproducts.\footnote{The subscript `$c$' in $\RHom[c]{-}{-}$ stands for
    `cocontinuous.'} For example, there are isomorphisms in $\Hqe$
  \[
    \dgDerCat{\A^\op}\stackrel{\eqref{eq:dgDerCat-as-Hom}}{\cong}\RHom{\A}{\dgDerCat{\kk}}\stackrel{\eqref{eq:dg-Kans-theorem}}\cong\RHom[c]{\dgDerCat{\A}}{\dgDerCat{\kk}}.
  \]
\item \cite[Cor.~7.6]{Toe07} The derived Eilenberg--Watts Theorem holds in
  $\Hqe$: For small dg categories $\A$ and $\B$, there is a canonical
  isomorphism in $\Hqe$
  \begin{equation}
    \label{eq:derived-EW-theorem}
    \dgDerCat{\A^\op\Lotimes\B}\stackrel{\sim}{\longrightarrow}\RHom[c]{\dgDerCat{\A}}{\dgDerCat{\B}}.
  \end{equation}
\item Let $\A$ and $\B$ be small dg categories. Since the monoidal structure on
  $\Hqe$ is symmetric and the passage to derived dg categories preserves quasi-equivalences, there are canonical isomorphisms in $\Hqe$
  \begin{equation}
    \label{eq:symmetry-bimods}
    \begin{tikzcd}[row sep=normal,column sep=normal]
      \RHom[c]{\dgDerCat{\A^\op}}{\dgDerCat{\B}}\rar[leftrightarrow,dotted]{\sim}&\RHom[c]{\dgDerCat{\B^\op}}{\dgDerCat{\A}}\\
      \dgDerCat{\A\Lotimes\B}\uar{\wr}\rar{\sim}&\dgDerCat{\B\Lotimes\A}\uar{\wr}
    \end{tikzcd}
  \end{equation}
  where the top vertical isomorphisms are instances of the derived
  Eilenberg--Watts Theorem~\eqref{eq:derived-EW-theorem} and the bottom vertical
  arrows are instances of the isomorphism~\eqref{eq:dgDerCat-as-Hom}.
  Consequently, since equivalences of categories preserve small coproducts, a
  pair of isomorphisms
  \[
    \dgDerCat{\A_1}\stackrel{\sim}{\to}\dgDerCat{\A_2}\qquad\text{and}\qquad
    \dgDerCat{\B_1}\stackrel{\sim}{\to}\dgDerCat{\B_2}
  \]
  in $\Hqe$ induce an isomorphism in $\Hqe$
  \begin{equation}
    \label{eq:-derived-invariance-bimodules}
    \dgDerCat{\A_1\Lotimes\B_1}\stackrel{\sim}{\longrightarrow}\dgDerCat{\A_2\Lotimes\B_2},
  \end{equation}
  obtained explicitly as the composite
  \begin{align*}
    \dgDerCat{\A_1\Lotimes\B_1}&\stackrel{\sim}{\to}\RHom[c]{\dgDerCat{\A_1^\op}}{\dgDerCat{\B_1}}&&\eqref{eq:derived-EW-theorem}\\
                               &\stackrel{\sim}{\to}\RHom[c]{\dgDerCat{\A_1^\op}}{\dgDerCat{\B_2}}\\
                               &\stackrel{\sim}{\to}\RHom[c]{\dgDerCat{\B_2^\op}}{\dgDerCat{\A_1}}&&\eqref{eq:symmetry-bimods}\\
                               &\stackrel{\sim}{\to}\RHom[c]{\dgDerCat{\B_2^\op}}{\dgDerCat{\A_2}}\\
                               &\stackrel{\sim}{\to}\RHom[c]{\dgDerCat{\A_2^\op}}{\dgDerCat{\B_2}}&&\eqref{eq:symmetry-bimods}\\
                               &\stackrel{\sim}{\leftarrow}\dgDerCat{\A_2\Lotimes\B_2};&&\eqref{eq:derived-EW-theorem}
  \end{align*}
  here the second and the fourth isomorphisms stem from the functoriality of
  To{\"e}n's internal $\operatorname{Hom}$.
\end{itemize}

\begin{remark}
  \label{rmk:alternative_means}
  The isomorphism in \eqref{eq:-derived-invariance-bimodules} can established
  through alternative means as in~\cite[Theorem~2.1]{Ric91}, which deals with
  ordinary algebras, using for example the results
  in~\cite[Sections~7--9]{Kel94}. Alternatively, the isomorphism in question
  also follows from the existence of a suitable symmetric monoidal structure on
  $\Hmo$, the homotopy category of dg categories considered up to Morita
  equivalence, see for example~\cite[Section~1.6]{Tab15}. In particular, the use
  of To{\"e}n's internal $\operatorname{Hom}$ is not strictly necessary to prove
  \Cref{thm:Q-shaped-derived-invariance,thm:independence-of-generators} (but see
  \Cref{rmk:Deligne-type-tensor-product,rmk:Q-shaped-LuriesTP} for motivation
  for its use).
\end{remark}

\begin{remark}
  \label{rmk:Deligne-type-tensor-product}
  The construction
  \[
    (\dgDerCat{\A},\dgDerCat{\B})\mapsto\LTP{\dgDerCat{\A}}{\dgDerCat{\B}}\coloneqq\dgDerCat{\A\Lotimes\B}
  \]
  can be regarded as a Deligne-type tensor product (compare with~\cite[Prop.~5.3]{Del90})
  of compactly-generated pre-triangulated dg categories in the following sense:
  Let $\A,\B,\C$ be small dg categories, then there are isomorphisms in $\Hqe$
  \[
    \RHom[c]{\LTP{\dgDerCat{\A}}{\dgDerCat{\B}}}{\dgDerCat{\C}}\cong\RHom[c]{\dgDerCat{\A}}{\RHom[c]{\dgDerCat{\B}}{\dgDerCat{\C}}}.
  \]
  Indeed, there are isomorphisms in $\Hqe$
  \begin{align*}
    \RHom[c]{\LTP{\dgDerCat{\A}}{\dgDerCat{\B}}}{\dgDerCat{\C}}&\stackrel{\phantom{\eqref{eq:dg-Kans-theorem}}}{=}\RHom[c]{\dgDerCat{\A\Lotimes \B}}{\dgDerCat{\C}}\\
                                                               &\stackrel{\eqref{eq:dg-Kans-theorem}}{\cong}\RHom{\A\Lotimes\B}{\dgDerCat{\C}}\\
                                                               &\stackrel{\eqref{eq:RHom-adjunction}}{\cong}\RHom{\A}{\RHom{\B}{\dgDerCat{\C}}}\\
                                                               &\stackrel{\eqref{eq:dg-Kans-theorem}}{\cong}\RHom{\A}{\RHom[c]{\dgDerCat{\B}}{\dgDerCat{\C}}}\\
                                                               &\stackrel{\eqref{eq:derived-EW-theorem}}{\cong}\RHom{\A}{\dgDerCat{\B^\op\Lotimes\C}}\\
                                                               &\stackrel{\eqref{eq:dg-Kans-theorem}}{\cong}\RHom[c]{\dgDerCat{\A}}{\dgDerCat{\B^\op\Lotimes\C}}\\
                                                               &\stackrel{\eqref{eq:derived-EW-theorem}}{\cong}\RHom[c]{\dgDerCat{\A}}{\RHom[c]{\dgDerCat{\B}}{\dgDerCat{\C}}}.
  \end{align*}
  The more general case of well-generated pre-triangulated dg categories is
  treated in~\cite{LRG22}.
\end{remark}

\begin{remark}
  Let $\A$ and $\B$ be two small dg categories. Notice that, in general, the dg
  categories $\dgDerCat{\A\Lotimes\B}$ and $\dgDerCat{\A}\Lotimes\dgDerCat{\B}$
  are not isomorphic in $\Hqe$. Indeed, the category
  $\H[0]{\dgDerCat{\A}\Lotimes\dgDerCat{\B}}$ is not triangulated in general
  (it may be missing some cones).
\end{remark}

\subsection{Keller's Recognition Theorem}
\label{subsec:Keller}

Let $(\E,\S)$ be a Frobenius exact category with small coproducts. By a
well-known theorem of Happel~\cite[Sec.~I.2]{Hap88}, the stable category
$\stable{\E}[\S]$ is a triangulated category with small coproducts (see
also~\cite[Sec.~4.3]{Kel94}). Suppose that there exists a set
$G\subseteq\stable{\E}[\S]$ of compact generators. Choose a complete
$\S$-projective resolutions
\[
  \begin{tikzcd}[row sep=tiny,column sep=small] \P{X}\colon& \cdots\drar[two
    heads]\ar{rr}{d_{\P{X}}^{-1}}&&\P{X}[0]\ar{rr}{d_{\P{X}}^{0}}\ar[two
    heads]{dr}[swap]{p^0}&&\P{X}[1]\ar{rr}{d_{\P{X}}^1}\drar[two
    heads]&&\cdots\\
    &&\Loops{X}\urar[tail]&&X\ar[tail]{ur}&&\Loops[-1]{X}\urar[tail]
  \end{tikzcd}
\]
for each object $X\in G$; thus $\P{X}$ is an acyclic cochain complex of
$\S$-projective objects in $\E$ such that
$\BH[0]{\P{X}}=X$.\footnote{In~\cite{Kel94} the convention is rather that
  $\Z[0]{\P{X}}=\BH[-1]{\P{X}}=X$. This discrepancy is inconsequential since the
  shift of cochain complexes is an automorphism. We prefer the adopted
  convention since it results in simpler formulas in our context.} More
generally, set
\[
  \Loops[i]{X}\coloneqq\BH[-i]{\P{X}},\qquad i\in\ZZ.
\]

We let $\dgCh{\E}$ be the dg category of cochain complexes in $\E$, with the
corresponding dg $\kk$-modules of morphisms defined analogously to the dg
$\kk$-module of morphisms between two graded $\kk$-modules
(see~\eqref{eq:culprits} and~\eqref{eq:partial}). Consider the full dg
subcategory
\[
  \dgP{G}\coloneqq\set{\P{X}}[X\in G]\subseteq\dgCh{\E}.
\]
Keller's Recognition Theorem~\cite[Sec.~4.3]{Kel94} states that there are
equivalences of (dg enhanced) triangulated categories
\begin{equation}
  \label{eq:Kellers-recognition}
  \begin{tikzcd}[row sep=tiny]
    \DerCat{\dgP{G}}\rar[leftarrow]{\sim}&\H[0]{\dgCh[\acycl]{\Proj{\E,\S}}}\rar{\sim}&\stable{\E}[\S]
  \end{tikzcd}
\end{equation}
between the derived category of the small dg category $\dgP{G}$ and the stable
category of the Frobenius exact category $(\E,\S)$; here
\[
  \dgCh[\acycl]{\Proj{\E,\S}}=
  \dgCh[(\E,\S)\text{-}\acycl]{\Proj{\E,\S}}\subseteq\dgCh{\E}
\]
denotes the full dg subcategory spanned by the acyclic cochain complexes of
$\S$-projective objects in $\E$.

For later use, we introduce the morphism of cochain complexes of $\kk$-modules
(compare with~\eqref{eq:Kellers-recognition})
\begin{equation}
  \label{eq:stableHom-dg}
  \psi=\psi_{X,Y}\colon\dgHom[\E]{\P{X}}{\P{Y}}\longrightarrow\dgHom[\E]{\P{X}}{Y},\qquad X,Y\in G,
\end{equation}
whose component of degree $-i$ is the morphism
of $\kk$-modules
\begin{equation}
  \label{eq:def-psi}
  \dgHom[\E]{\P{X}}{\P{Y}}[-i]\longrightarrow\Hom[\E]{\P{X}[i]}{Y},\quad
  (f^j)_{j\in\ZZ}\longmapsto (\P{X}[i]\stackrel{f^{i}}{\to}
  \P{Y}[0]\stackrel{p^0}{\to} Y);
\end{equation}
here, we view $Y$ as a cochain complex concentrated in degree $0$ and observe
the differentials on the target cochain complex are given by
$(-1)^id_{\P{X}}^i$.
Although the following lemma is well known to experts, we sketch a proof as it
plays an essential role in our proof of \Cref{thm:Q-shaped-bimodules}.

\begin{lemma}
  \label{lemma:psi}
  The morphism of dg $\kk$-modules~\eqref{eq:stableHom-dg}
  \[
    \psi\colon\dgHom[\E]{\P{X}}{\P{Y}}\longrightarrow\dgHom[\E]{\P{X}}{Y},
  \]
  is a quasi-isomorphism. In particular, $\psi$ induces an isomorphism of
  $\kk$-modules\footnote{This isomorphism is perhaps more familiar under the
    identification
    \[
      \sHom[\E,\S]{\Loops[i]{X}}{Y}\cong\sHom[\E,\S]{X}{\Suspension[i]{Y}},
    \]
    where $\Sigma\coloneqq\Omega^{-1}$ is the suspension functor in the
    triangulated category $\stable{\E}[\S]$.}
  \[
    \H[i]{\psi}\colon\H[i]{\dgHom[\E]{\P{X}}{\P{Y}}}\stackrel{\sim}{\longrightarrow}\H[i]{\dgHom[\E]{\P{X}}{Y}}\cong\sHom[\E,\S]{\Loops[i]{X}}{Y},
  \]
  for each $i\in\ZZ$, where the right-hand side denotes the $\kk$-module of
  morphisms $\Loops[i]{X}\to Y$ in the stable category $\stable{\E}[\S]$.
\end{lemma}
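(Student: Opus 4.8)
The plan is to realise $\psi$ as the map induced by a single canonical chain map, and then to reduce the statement to the acyclicity of two auxiliary Hom-complexes. Write $Y$ also for the complex concentrated in degree $0$. The epimorphism $p^0\colon\P{Y}[0]\to Y$ satisfies $p^0\circ d_{\P{Y}}^{-1}=0$ (acyclicity of $\P{Y}$ at degree $0$), so it assembles into a morphism of cochain complexes $\pi\colon\P{Y}\to Y$. Comparing with \eqref{eq:def-psi}, the degree-$(-i)$ component of $\dgHom[\E]{\P{X}}{\pi}$ sends $(f^j)_j$ to $p^0\circ f^i$; thus, up to the sign twist on the target---introduced precisely so that no signs intervene---we have $\psi=\dgHom[\E]{\P{X}}{\pi}$. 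In particular $\psi$ is automatically a morphism of dg $\kk$-modules, which disposes of the otherwise fiddly verification that $\psi$ is a chain map.

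Next I would feed the kernel of $\pi$ through $\dgHom[\E]{\P{X}}{-}$. The short exact sequence $0\to K\to\P{Y}\xrightarrow{\pi}Y\to 0$ is a degreewise $\S$-conflation, so applying $\dgHom[\E]{\P{X}}{-}$---exact in each degree because every term of $\P{X}$ is $\S$-projective---produces a short exact sequence of dg $\kk$-modules whose long exact cohomology sequence reduces the claim to the acyclicity of $\dgHom[\E]{\P{X}}{K}$. Here $K=\ker\pi$ equals $\P{Y}$ away from degree $0$ and has $\ker p^0$ in degree $0$. Splitting it by its brutal truncation gives a degreewise split short exact sequence $0\to\sigma_{\geq 1}\P{Y}\to K\to J\to 0$, in which $\sigma_{\geq 1}\P{Y}$ is a bounded-below complex of $\S$-injective objects and $J\coloneqq\sigma_{\leq 0}K$ is a bounded-above \emph{acyclic} complex; hence it suffices to show that $\dgHom[\E]{\P{X}}{\sigma_{\geq 1}\P{Y}}$ and $\dgHom[\E]{\P{X}}{J}$ are acyclic.

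This is the crux, and it is where the Frobenius hypothesis is decisive: since $\S$-projectives and $\S$-injectives coincide in $(\E,\S)$, every term of $\P{X}$ is at once $\S$-projective and $\S$-injective. For the bounded-below complex of $\S$-injectives $\sigma_{\geq 1}\P{Y}$ I would null-homotope any degree-$i$ chain map out of $\P{X}$ by \emph{ascending} induction, beginning in the low degrees where the target vanishes and, at each step, extending the relevant map along the inflation $\img d_{\P{X}}^{\,n}\hookrightarrow\P{X}[n+1]$ (available since $\img d_{\P{X}}^{\,n}=\ker d_{\P{X}}^{\,n+1}$ by acyclicity) using the $\S$-injectivity of the target terms; acyclicity also guarantees that the map being extended vanishes on the requisite cocycles, so it is well defined. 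Dually, for the bounded-above acyclic complex $J$ I would null-homotope by \emph{descending} induction, beginning in the high degrees where $J$ vanishes and lifting at each step along the $\S$-deflation $J^{\,n-1}\twoheadrightarrow\ker d_J^{\,n}$ afforded by the acyclicity of $J$, now using the $\S$-projectivity of the terms of $\P{X}$. Both constructions work for every shift, so $\dgHom[\E]{\P{X}}{\sigma_{\geq 1}\P{Y}}$ and $\dgHom[\E]{\P{X}}{J}$---and therefore $\dgHom[\E]{\P{X}}{K}$---are acyclic, and $\psi$ is a quasi-isomorphism.

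For the ``in particular'' part I would compute the target cohomology directly: a degree-$i$ cocycle in $\Hom[\E]{\P{X}}{Y}$ is a morphism annihilating the relevant boundary, hence factoring through the syzygy $\Loops[i]{X}=\BH[-i]{\P{X}}$, whereas the coboundaries are exactly the morphisms factoring through an $\S$-projective (equivalently $\S$-injective) object, i.e.\ those that vanish in $\stable{\E}[\S]$; the quotient is therefore $\sHom[\E,\S]{\Loops[i]{X}}{Y}$, and the displayed isomorphism is the one induced by the quasi-isomorphism $\psi$. The principal obstacle throughout is that the homotopies must be built against the \emph{unbounded} complex $\P{X}$: this succeeds only because its terms are simultaneously $\S$-projective and $\S$-injective, which is exactly the bidirectional leverage the Frobenius structure provides.
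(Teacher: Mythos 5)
Your proof is correct, but it takes a genuinely different route from the paper's. Your opening move---identifying $\psi$, after the sign twist, with $\dgHom[\E]{\P{X}}{\pi}$ for the augmentation $\pi\colon\P{Y}\to Y$---is exactly the observation the paper relegates to a footnote (the main text instead verifies the chain-map property by direct computation). After that the arguments diverge: the paper proves surjectivity and injectivity of $\H[-i]{\psi}$ by hand, constructing a preimage chain map $\P{X}\to\P{Y}$, respectively a null-homotopy, through an interleaved induction that uses lifting along deflations ($\S$-projectivity) and extension along inflations ($\S$-injectivity) simultaneously. You instead reduce, via the degreewise conflation $0\to K\to\P{Y}\to Y\to0$ and exactness of $\Hom[\E]{\P{X}[j]}{-}$ on conflations (products of $\kk$-modules being exact), to the acyclicity of $\dgHom[\E]{\P{X}}{K}$, and then split $K$ by brutal truncation into a bounded-below complex of $\S$-injectives and the bounded-above acyclic complex $J$, disposing of each by a clean one-directional induction; these are precisely the exact-category versions of the two standard lemmas ``$\dgHom$ from an acyclic complex into a bounded-below complex of injectives is acyclic'' and ``$\dgHom$ from a complex of projectives into a bounded-above acyclic complex is acyclic,'' and your error-term bookkeeping (the obstruction is killed by the differential, hence factors through the cycles, which by acyclicity sit in a conflation along which one lifts or extends) is sound. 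Your route buys modularity---it isolates exactly where acyclicity, boundedness, and the Frobenius hypothesis enter---while the paper's buys a single self-contained construction with explicit diagrams. One slip in your closing sentence only: the Frobenius property is needed to make the terms of $\P{Y}$ (given as $\S$-projective) also $\S$-injective for the $\sigma_{\geq1}\P{Y}$ induction, so it is $\P{Y}$'s terms, not $\P{X}$'s, that play the double role; the body of your argument uses the correct properties in the correct places, so this is a misstatement in the summary rather than a gap.
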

\begin{proof}
  We begin by observing that $\psi$ is a morphism of graded $\kk$-modules since
  its components
  \[
    \dgHom[\E]{\P{X}}{\P{Y}}[-i]=\prod_{j\in\ZZ}\Hom[\E]{\P{X}[j]}{\P{Y}[j-i]}\stackrel{\pi_i}{\longrightarrow}\Hom[\E]{\P{X}[i]}{\P{Y}[0]}\stackrel{p^0\circ?}{\longrightarrow}\Hom[\E]{\P{X}[i]}{Y}
  \]
  are composites of morphisms of $\kk$-modules. We now verify that $\psi$ is a
  morphism of dg $\kk$-modules: For a morphism
  $f\in\dgHom[\E]{\P{X}}{\P{Y}}[-i]$ we compute\footnote{Alternatively, notice that
  \[
    \psi=p\circ?\colon\dgHom[\E]{\P{X}}{\P{Y}}\longrightarrow\dgHom[\E]{\P{X}}{Y}
  \]
  is the morphism of dg $\kk$-modules induced by the canonical morphism of cochain
  complexes ${p\colon \P{Y}\to Y}$ whose only non-zero component is ${p^0\colon
  \P{Y}[0]\to Y}$.}
  \begin{align*}
    \psi(\partial(f))&=\psi((d_{\P{Y}}^{j-i}\circ f^j-(-1)^if^{j+1}\circ d_{\P{X}}^j)_{j\in\ZZ})&&\eqref{eq:partial-coords}\\
                     &=p^0\circ(d_{\P{Y}}^{-1}\circ f^{i-1}-(-)^if^i\circ d_{\P{X}}^{i-1})&&\eqref{eq:def-psi}\\
                     &=(p^0\circ f^{i})\circ((-1)^{i-1} d_{\P{X}}^{i-1})&&p^0\circ d_{\P{Y}}^{-1}=0\\
                     &=\psi(f)\circ((-1)^{i-1}d_{\P{X}}^{i-1});&&\eqref{eq:def-psi}
  \end{align*}  
  here, in the second equality we use that
  the morphism $\partial(f)\in\dgHom[\E]{\P{X}}{\P{Y}}[1-i]$ is homogeneous of {degree $1-i$}.  
  
  We now prove that $\psi$ is a quasi-isomorphism. Fix $i\in\ZZ$. We prove first
  that the morphism of $\kk$-modules $\H[-i]{\psi}$ is surjective. Let
  $g\in\Hom[\E]{\P{X}[i]}{Y}$ be a morphism such that $g\circ
  d_{\P{X}}^{i-1}=0$, so that $g$ represents a class in the $(-i)$-th cohomology
  of the target of the morphism $\psi$. By the universal property of the cokernel, the
  lifting property of $\S$-projective objects, and the extension property of
  $\S$-injective objects, there exists a commutative diagram
  \[
    \begin{tikzcd}[row sep=tiny,column sep=small] \P{X}\colon&
      \cdots\drar[two heads]\ar{rr}{d_{\P{X}}^{i-1}}&&\P{X}[i]\ar[dotted]{dddd}{f^i}\ar{rr}{d_{\P{X}}^{i}}\ar[two heads]{dr}\ar{dddr}[description]{g}&&\P{X}[i]\ar[dotted]{dddd}{(-1)^if^{i+1}}\ar{rr}{d_{\P{X}}^{i+1}}\drar[two heads]&&\cdots\\
      &&\cdots\urar[tail]&&\Loops[-i]{X}\ar[dotted]{dddr}\ar[tail]{ur}\ar[dotted]{dd}&&\cdots\urar[tail]\\
      \phantom{X}\\
      &&\cdots\drar[tail]&&Y\ar[tail]{dr}&&\cdots\drar[tail]\\
      \P{Y}\colon&\cdots\urar[two
      heads]\ar{rr}[swap]{d_{\P{Y}}^{-1}}&&\P{Y}[0]\ar{rr}[swap]{d_{\P{Y}}^{0}}\urar[two
      heads]{p^0}&&\P{Y}[1]\urar[two heads]\ar{rr}[swap]{d_{\P{Y}}^{1}}&&\cdots
    \end{tikzcd}
  \]
  In particular $d_{\P{Y}}^0\circ f^i=(-1)^i f^{i+1}\circ d_{\P{X}}^i$. By the
  usual inductive argument using lifting and extension properties of $\S$-projective
  and of $\S$-injective objects, the morphisms
  $f^i$ and $f^{i+1}$ extend to a degree $-i$ cocycle
  ${f\in\dgHom[\E]{\P{X}}{\P{Y}}[-i]}$, that is
  \[
    0=\partial(f)\stackrel{\eqref{eq:partial-coords}}{=}(d_{\P{Y}}^{j-i}\circ
    f^j-(-1)^if^{j+1}\circ d_{\P{X}}^j)_{j\in\ZZ},
  \]
  that by construction satisfies $\psi(f)=p^0\circ f^i=g$.
 
  We now prove that $\H[-i]{\psi}$ is injective. For this, let
  $f\in\dgHom[\E]{\P{X}}{\P{Y}}[-i]$ be a degree $-i$ cocycle such that $\psi(f)\in\Hom[\E]{\P{X}[i]}{Y}$ is
  cohomologically trivial, which is to say that
  \[
    \psi(f)=p^0\circ
    f^i\stackrel{!}{=}\widetilde{h}\circ(-1)^id_{\P{X}}^{i}=(-1)^i\widetilde{h}\circ
    d_{\P{X}}^{i}
  \]
  for some morphism $\widetilde{h}\colon\P{X}[i+1]\to Y$. The lifting property
  of $\S$-projective objects yields a diagram
  \[
    \begin{tikzcd}[row sep=tiny,column sep=small] \P{X}\colon\ar{dddd}{f}&
      \cdots\ar{dddd}\drar[two
      heads]\ar{rr}{d_{\P{X}}^{i-1}}&&\P{X}[i]\ar{dddd}{f^i}\ar{rr}{d_{\P{X}}^{i}}\ar[two
      heads]{dr}&&\P{X}[i+1]\ar{dddl}[description]{\widetilde{h}}\ar{dddd}{f^{i+1}}\ar[dotted]{ddddll}[description]{h^{i+1}}\ar{rr}{d_{\P{X}}^{i+1}}\drar[two
      heads]&&\cdots\\
      &&\cdots\urar[tail]&&\Loops[-i]{X}\ar[tail]{ur}&&\cdots\urar[tail]\\
      \phantom{X}\\
      &&\cdots\drar[tail]&&Y\ar[tail]{dr}&&\cdots\drar[tail]\\
      \P{Y}\colon&\cdots\urar[two
      heads]\ar{rr}[swap]{d_{\P{Y}}^{-1}}&&\P{Y}[0]\ar{rr}[swap]{d_{\P{Y}}^{0}}\ar[two
      heads]{ur}[description]{p^0}&&\P{Y}[1]\urar[two
      heads]\ar{rr}[swap]{d_{\P{Y}}^{1}}&&\cdots
    \end{tikzcd}
  \]
  such that $\widetilde{h}=p^0\circ h^{i+1}$. A straightforward inductive
  argument using lifting and extension properties shows that $h^{i+1}$ extends
  to a morphism ${h\in\dgHom[\E]{\P{X}}{\P{Y}}[-i-1]}$ such that
  \[
    f=\partial(h)\stackrel{\eqref{eq:partial-coords}}=(d_{\P{Y}}^{j-i-1}\circ
    h^j+(-1)^{i}h^{j+1}\circ d_{\P{X}}^j)_{j\in\ZZ}.
  \]
  For instance,
  \begin{equation}
    \label{eq:aux-psi}
    p^0\circ(f^i-(-1)^{i}h^{i+1}\circ d_{\P{X}}^{i})=p^0\circ
    f^i-(-1)^i\widetilde{h}\circ d_{\P{X}}^i=0
  \end{equation}
  so that $f^i-(-1)^{i}h^{i+1}\circ d_{\P{X}}^{i}$ factors through the kernel of
  $d_{\P{Y}}^0$. Equality~\eqref{eq:aux-psi} also implies that
  \[
    (-1)^if^{i+1}\circ d_{\P{X}}^i=d_{\P{Y}}^0\circ f^i=(-1)^id_{\P{Y}}^0\circ
    h^{i+1}\circ d_{\P{X}}^i
  \]
  so that $f^{i+1}-d_{\P{Y}}^0\circ h^{i+1}$ factors through the cokernel of
  $d_{\P{X}}^i$, etc. This shows that $\psi$ is a quasi-isomorphism of dg
  $\kk$-modules.
  
  Finally, the existence of isomorphisms of $\kk$-modules
  \[
    \H[-i]{\dgHom[\E]{\P{X}}{Y}}\cong\sHom[\E,\S]{\Loops[-i]{X}}{Y},\qquad
    i\in\ZZ,
  \]
  follows readily from the universal property of cokernels and the extension
  property of $\S$-injective objects, since a morphism $\Loops[-i]{X}\to Y$
  factors through an $\S$-injective object if and only if it factors through the
  admissible monomorphism ${\Loops[-i]{X}\rightarrowtail\P{X}[i+1]}$:
  \[
    \begin{tikzcd}[row sep=small,column sep=normal]
      \cdots\ar{rr}{d_{\P{X}}^{i-2}}&&\P{X}[i-1]\rar{d_{\P{X}}^{i-1}}\ar[bend
      right]{ddrr}[swap]{0}&\P{X}[i]\ar{rr}{d_{\P{X}}^{i}}\ar[bend
      right,dotted]{ddr}\drar[two heads]&&\P{X}[i+1]\ar[bend left,dotted]{ddl}\ar{rr}{d_{\P{X}}^{i+1}}&&\cdots\\
      &&&&\Loops[-i]{X}\urar[tail]\dar\\
      &&&&Y
    \end{tikzcd}
  \]
  This finishes the proof of the lemma.
\end{proof}

\subsection{$Q$-shaped derived categories}
\label{subsec:Q-shaped}

The following setup is fixed throughout this subsection, as it is needed in
order to apply~\cite[Thm.~D]{HJ24}, which yields the fact that the $Q$-shaped
derived category of a $\kk$-algebra $A$ is a compactly generated (algebraic)
triangulated $\kk$-category with a distinguished set of compact generators.

\begin{setup}
  \label{setup:Q-shaped-dercats}
  From now on the ground commutative ring $\kk$ is assumed to be a hereditary
  noetherian ring (e.g.~$\kk=\ZZ$ or $\kk$ is a field). We fix an arbitrary
  $\kk$-algebra $A$ and a small $\kk$-category $Q$ that satisfies the
  assumptions in~\cite[Setup~2.9]{HJ24}. We do not recall all of them in detail
  here, but only mention that these include the assumption that the
  $\kk$-modules of morphisms in $Q$ are finitely generated and projective, the
  existence of a Serre functor for $Q$, and the assumption that the category $Q$
  locally bounded and skeletal~\cite[Rmk.~7.6]{HJ22} (see
  also~\Cref{ex:stableQMod} for an important special case in which the
  assumptions are satisfied).
\end{setup}

\subsubsection{Construction of $\DerCat<Q>{A}$}

We recall the construction of $Q$-shaped derived categories following the
summary given in~\cite[Sec.~2.10]{HJ24}. Consider the $\kk$-category
\[
  \Mod{A\otimes Q}\coloneqq\Fun[\kk]{(A\otimes Q)^\op}{\Mod{\kk}}
\]
of right $(A\otimes Q)$-modules,\footnote{We warn the reader that
  \cite{HJ22,HJ24} work with \emph{left} modules---the discrepancy in
  conventions is immaterial since the assumptions on $Q$ are
  self-dual~\cite[Rmk.~2.6]{HJ22}. Furthermore, in \emph{op.~cit.}~the authors
  work with the category of $\kk$-linear functors $Q\to\Mod*{A}$, which is
  isomorphic to the category of left $(A\otimes Q)$-modules. We prefer to work
  with right $(A\otimes Q)$-modules since we wish to utilise the (covariant)
  Yoneda embedding in the proof of \Cref{thm:Q-shaped-bimodules}.} that is of
$\kk$-linear functors
\begin{align*}
  M\colon(A\otimes Q)^\op=(A^\op\otimes Q^\op)&\longrightarrow\Mod{\kk}\\
  q=(*,q)&\longmapsto M(q)=M(*,q).
\end{align*}
A morphism $f\colon M\to N$ between $(A\otimes Q)$-modules is therefore natural
transformation between $\kk$-linear functors, and hence it is given by a family
of morphisms of $\kk$-modules
\[
  f=(f_q\colon M(q)\to N(q))_{q\in Q}=(x \mapsto f_q(x))_{q\in Q}
\]
that satisfy the usual naturality constraints with respect to morphisms in
$A\otimes Q$.

The structure map $\kk\to A$ induces the standard adjunction
\[
  \begin{tikzcd}
    A\otimes-\colon\Mod{\kk}\rar[shift left]&\lar[shift
    left]\Mod{A}\noloc(-)^\natural,
  \end{tikzcd}
\]
where $M\mapsto M^\natural$ is the forgetful functor. By means of the
isomorphism of $\kk$-categories
\[
  \Mod{A\otimes Q}\cong\Fun[\kk]{Q^\op}{\Mod{A}},
\]
we obtain an induced adjunction (compare with~\cite[Def.~3.2 and
Cor.~3.5]{HJ22})
\[
  \begin{tikzcd}
    A\otimes-\colon\Mod{Q}\rar[shift left]&\lar[shift left]\Mod{A\otimes
      Q}\noloc(-)^\natural.
  \end{tikzcd}
\]
In particular, given a $Q$-module $M$, the $(A\otimes Q)$-module $A\otimes M$
acts on objects via
\[
  (A\otimes M)(q)\coloneqq A\otimes M(q),\qquad q\in Q,
\]
and on morphisms, utilising the right action of $A$ on itself, via
\begin{align}
  \label{eq:right-action-A}
  \begin{split}
    (A\otimes M)(b\otimes y)\colon A\otimes M(q)&\longrightarrow A\otimes
                                                  M(q')\\
    a\otimes m&\longmapsto ab\otimes M(y)(m),
  \end{split}
\end{align}
where $a,b\in A$ and $y\in Q(q',q)$.

Associated to the triple $(\kk,Q,A)$ there is the full subcategory
\begin{equation}
  \label{eq:exact-objects}
  \E\coloneqq\set{M\in\Mod{A\otimes Q}}[M^\natural\in\Mod{Q}\text{has finite
    projective dimension}]
\end{equation}
of \emph{exact objects}~\cite[Def.~4.1]{HJ22}. The subcategory
\begin{equation}
  \label{eq:semi-projectives}
  {}^{\perp_1}\E\coloneqq\set{M\in\Mod{A\otimes Q}}[\operatorname{Ext}_{A\otimes
    Q}^1(M,\E)=0]
\end{equation}
of \emph{semi-projective objects}\footnote{See~\cite[Ex.~3.3]{HJ24} for
  motivation for the use of this terminology.} is a Frobenius exact category
(with the exact structure inherited as an extension-closed subcategory of
$\Mod{A\otimes Q}$) and whose class of projective-injective objects is precisely
the class $\Proj{A\otimes Q}$ of projective $(A\otimes
Q)$-modules~\cite[Thm.~6.5]{HJ22}.

\begin{definition}[{\cite{HJ22,HJ24}}] The \emph{$Q$-shaped derived category of
    $A$} is by definition the (triangulated) stable category
  \[
    \DerCat<Q>{A}\coloneqq\underline{{}^{\perp_1}\E}.
  \]
\end{definition}

\begin{notation}
  We call the dg category
  \[
    \dgDerCat<Q>{A}\coloneqq\dgCh[{}^{\perp_1}\E\text{-}\acycl]{\Proj{A\otimes
        Q}}
  \]
  the \emph{$Q$-shaped derived dg category of $A$}. Notice that there is an
  equivalence of (dg enhanced) triangulated
  categories~\eqref{eq:Kellers-recognition}
  \[
    \H[0]{\dgDerCat<Q>{A}}\stackrel{\sim}{\longrightarrow}\DerCat<Q>{A},
  \]
  so that we may regard $\dgDerCat<Q>{A}$ as a canonical dg
  enhancement~\cite{BK90} of the $Q$-shaped derived category $\DerCat<Q>{A}$.
\end{notation}

\subsubsection{The canonical compact generators of $\DerCat<Q>{A}$}

By construction, the $Q$-shaped derived category $\DerCat<Q>{A}$ is an algebraic
triangulated category with small coproducts and, moreover, it admits a
distinguished set of compact generators, which we now describe. For an object
$q\in Q$, consider the `stalk' $Q$-module defined by~\cite[Def.~7.9 and
Lemma~7.10]{HJ22}
\begin{align}
  \label{eq:stalks-Q}
  \begin{split}
    S_q(q')\coloneqq\begin{cases}
      \kk&q=q',\\
      0&q\neq q',
    \end{cases}\qquad
    S_q(x)\coloneqq\begin{cases}
      \lambda\cdot \id[q]&x-\lambda\cdot \id[q]\in\mathfrak{r}_q,\\
      0&\text{otherwise},
    \end{cases}
  \end{split}
\end{align}
where $Q(q,q)\cong\kk\cdot\id[q]\oplus\mathfrak{r}_q$ for a (fixed) choice of
\emph{pseudo-radical} $\mathfrak{r}_q\subset Q(q,q)$~\cite[Def.~7.3 and
Lemma~7.7]{HJ22}. Then, the set
\begin{equation}
  \label{eq:stalks}
  S_Q(A)\coloneqq\set{A\otimes S_q\in\Mod{A\otimes Q}}[q\in Q]
\end{equation}
consists of semi-projective objects~\cite[Prop.~3.2]{HJ24} and, moreover, yields
a set of compact generators for $\DerCat<Q>{A}$~\cite[Thm.~D]{HJ24}.

\begin{example}
  \label{ex:stableQMod}
  Let $\kk$ be a field. By assumption, $Q$ has finite-dimensional morphisms
  spaces, is locally bounded and has a Serre functor in the sense
  of~\cite{BK90}. Consequently, $Q$ is a self-injective $\kk$-category and the
  $Q$-shaped derived category
  \[
    \DerCat<Q>{\kk}=\sMod{Q}
  \]
  is the stable category of all $Q$-modules. Indeed, there are equalities
  \[
    \E=\Proj{Q}=\Inj{Q}
  \]
  in this case (compare with~\eqref{eq:exact-objects}
  and~\eqref{eq:semi-projectives}). Moreover, the stalk $Q$-modules $S_q$, $q\in
  Q$, are the simple tops of the representable $Q$-modules in this case. In
  particular, if $Q$ is the $\kk$-category associated to a basic and
  elementary\footnote{Recall that a finite-dimensional $\kk$-algebra $\Lambda$
    is elementary if the endomorphism algebras of the simple $\Lambda$-modules
    are isomorphic to $\kk$. This assumption is needed in order to fit squarely
    within the assumptions in~\cite[Setup~2.9]{HJ24}, compare with condition
    (Rad) in~\cite[Def.~2.3]{HJ19a}.} finite-dimensional $\kk$-algebra that is
  self-injective, then $Q$ satisfies the conditions
  in~\Cref{setup:Q-shaped-dercats} with the Jacobson radical as the
  pseudo-radical (see~\cite[Ex.~7.8]{HJ22}).
\end{example}

\subsubsection{Description of $\DerCat<Q>{A}$ as the derived category of a small
  dg category}
\label{subsubsec:stalks}

Let $q\in Q$ be an object. There is a complete projective resolution $\P{S_q}$
of the (semi-projective) stalk $Q$-module $S_q$ with the following
properties~\cite[Prop.~5.11]{HJ24}:
\begin{itemize}
\item \cite[Def.~5.3, Lemmas~3.4 and~5.8, eqs.~$\sharp$14 and~$\sharp$15]{HJ24}
  The component of cohomological degree $i\in\ZZ$ of $\P{S_q}$ is of the form
  \[
    \P{S_q}[i]=\bigoplus_{j=1}^{n_i}P_{q_{i,j}}\otimes Q(-,q_{i,j}),
  \]
  where the $P_{q_{i,j}}$ are finitely generated projective $\kk$-modules.
\item \cite[Def.~3.8, Props.~5.7 and~5.11]{HJ24} The short exact sequences of $Q$-modules
  \[
    0\to\Omega_{{}^{\perp_1}{\E}}^{1-i}(S_q)\to
    \P{S_q}[i]\to\Omega_{{}^{\perp_1}{\E}}^{-i}(S_q)\to 0,\qquad i\in\ZZ,
  \]
  are object-wise split, which is to say that evaluating the above sequence at
  an arbitrary object of $Q$ yields a split short exact sequence of
  $\kk$-modules.
\end{itemize}
Since $\kk$-linear functors preserve split short exact sequences of $\kk$-modules, the cochain
complex of $(A\otimes Q)$-modules
\begin{equation}
  \label{eq:proj-AQ}
  \P{A\otimes S_q}\coloneqq A\otimes\P{S_q}
\end{equation}
is exact and is a complete projective resolution of the $(A\otimes Q)$-module
$A\otimes S_q$. For later use, we observe that $\P{A\otimes S_q}$ has the
components
\begin{equation}
  \label{eq:components-tensor-res}
  \P{A\otimes S_q}[i]=A\otimes\P{S_q}[i]\cong\bigoplus_{j=1}^{n_i}A\otimes P_{q_{i,j}}\otimes
  Q(-,q_{i,j}),\qquad i\in\ZZ,
\end{equation}
and the differential
\begin{equation}
  \label{eq:diff-APSq}
  d_{\P{A\otimes S_q}}=1_A\otimes d_{\P{S_q}}.
\end{equation}
Finally, we introduce the dg categories (compare with~\eqref{eq:stalks})
\begin{align}
  \label{eq:dg-stalks}
  \begin{split}
    \dgP{S_Q(\kk)}&\coloneqq\set{\P{S_q}}[q\in Q]\subseteq\dgCh{\Mod{Q}}\text{ and}\\
    \dgP{S_Q(A)}&\coloneqq\set{\P{A\otimes S_q}}[q\in Q]=\set{A\otimes\P{S_q}}[q\in Q]\subseteq\dgCh{\Mod{A\otimes Q}}.
  \end{split}
\end{align}
By Keller's Recognition Theorem, there are equivalences of (dg enhanced)
triangulated categories~\eqref{eq:Kellers-recognition}
\begin{equation}
  \label{eq:keller-Q}
  \DerCat{\dgP{S_Q(\kk)}}\stackrel{\sim}{\longleftrightarrow}\DerCat<Q>{\kk}\qquad\text{and}\qquad\DerCat{\dgP{S_Q(A)}}\stackrel{\sim}{\longleftrightarrow}\DerCat<Q>{A}.
\end{equation}

\section{Main results}
\label{sec:main-result}

\subsection{The proof of \Cref{thm:Q-shaped-bimodules}}

Recall \Cref{setup:Q-shaped-dercats} and the notations introduced
in~\Cref{subsec:Q-shaped}. To a $\kk$-module $V$ and a $Q$-module $M$ we
associate the $Q$-module
\[
  \begin{tikzcd}[column sep=large]
    \Hom[\kk]{V}{M}\colon Q^\op\rar{M}&\Mod{\kk}\rar{\Hom[\kk]{V}{-}}&\Mod{\kk}
  \end{tikzcd}
\]
and the $(A\otimes Q)$-module
\[
  \begin{tikzcd}[column sep=large]
    \Hom[\kk]{V}{A\otimes M}\colon (A\otimes Q)^\op\rar{A\otimes
      M}&\Mod{\kk}\rar{\Hom[\kk]{V}{-}}&\Mod{\kk}.
  \end{tikzcd}
\]
We need the following general observation (compare
with~\cite[Lemma~2.3.4]{GHJS24}).

\begin{lemma}
  \label{lemma:rho}
  Let $P$ be a finitely generated projective $\kk$-module and $M$ an arbitrary
  $Q$-module. Then, there is a natural isomorphism of $\kk$-modules
  \[
    A\otimes\Hom[Q]{P\otimes
      Q(-,q)}{M}\stackrel{\sim}{\longrightarrow}\Hom[A\otimes Q]{A \otimes
      P\otimes Q(-,q)}{A\otimes M},\quad q\in Q,
  \]
  given by the formula
  \[
    a\otimes f\mapsto(b \otimes x \otimes y\mapsto ab\otimes f_{v}(x \otimes
    y))_{v\in Q},\quad a\in A,\ f\in\Hom[Q]{P\otimes Q(-,q)}{M}.
  \]
\end{lemma}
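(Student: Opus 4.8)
The plan is to reduce both sides to $\Hom$-modules out of the finitely generated projective $\kk$-module $P$ by means of the (enriched) Yoneda lemma, and then to interchange the tensor factor $A$ with $\Hom[\kk]{P}{-}$; this last step is the only place where the hypothesis on $P$ is used.

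First I would exploit that, in the $\kk$-enriched category $\Mod{Q}$, the module $P\otimes Q(-,q)$ is the copower of the representable module $Q(-,q)$ by $P$, so that the copower adjunction together with the Yoneda isomorphism $\Hom[Q]{Q(-,q)}{M}\cong M(q)$ yields a natural isomorphism
\[
  \Hom[Q]{P\otimes Q(-,q)}{M}\cong\Hom[\kk]{P}{M(q)}.
\]
Tensoring with $A$ identifies the left-hand side of the lemma with $A\otimes\Hom[\kk]{P}{M(q)}$. Applying the same reasoning in $\Mod{A\otimes Q}$ --- where $A\otimes P\otimes Q(-,q)$ is, up to the symmetry of $\otimes$, the copower by $P$ of the representable $(A\otimes Q)$-module $(A\otimes Q)(-,q)=A\otimes Q(-,q)$, and where Yoneda gives $\Hom[A\otimes Q]{A\otimes Q(-,q)}{A\otimes M}\cong(A\otimes M)(q)=A\otimes M(q)$ --- identifies the right-hand side with $\Hom[\kk]{P}{A\otimes M(q)}$. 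Note that neither identification uses any hypothesis on $P$.

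It therefore remains to produce a natural isomorphism $A\otimes\Hom[\kk]{P}{M(q)}\stackrel{\sim}{\to}\Hom[\kk]{P}{A\otimes M(q)}$, and this is exactly where finite generation and projectivity enter. Since $P$ is a finitely generated projective $\kk$-module it is dualisable, with dual $P^\vee\coloneqq\Hom[\kk]{P}{\kk}$, and evaluation provides a natural isomorphism $\Hom[\kk]{P}{N}\cong P^\vee\otimes N$ for every $\kk$-module $N$. Composing
\[
  A\otimes\Hom[\kk]{P}{M(q)}\cong A\otimes P^\vee\otimes M(q)\cong P^\vee\otimes A\otimes M(q)\cong\Hom[\kk]{P}{A\otimes M(q)},
\]
where the middle isomorphism is the symmetry of $\otimes=\otimes_\kk$, gives the desired comparison. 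For a general $\kk$-module $P$ the analogous canonical map exists but need not be invertible, so the hypothesis is essential here; this is the only real content of the lemma.

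Finally I would assemble the three isomorphisms and verify that the composite agrees with the stated formula by tracing an elementary tensor $a\otimes f$: the copower--Yoneda reduction sends $f$ to the $\kk$-linear map $x\mapsto f_q(x\otimes\id[q])$, and naturality of $f$ reintroduces the variable $y\in Q(v,q)$ through the identity $f_v(x\otimes y)=M(y)(f_q(x\otimes\id[q]))$, so that the reconstructed $(A\otimes Q)$-module morphism acts by $b\otimes x\otimes y\mapsto ab\otimes f_v(x\otimes y)$ in accordance with the right action~\eqref{eq:right-action-A}. Naturality in $M$ (and in $q$) is inherited from the naturality of the Yoneda isomorphism and of the tensor--hom comparison in each of their arguments, and the compatibility of the target with the $(A\otimes Q)$-module structure is automatic from the abstract construction. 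The main obstacle is thus not any single computation but bookkeeping: keeping the two Yoneda reductions over $Q$ and over $A\otimes Q$ in step so that the symmetry isomorphism places the $A$-factor in the position prescribed by the explicit formula.
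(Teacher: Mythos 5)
Your proposal is correct and takes essentially the same route as the paper's proof: both sides are reduced via the copower/internal-hom adjunction and the Yoneda lemma to $A\otimes\Hom[\kk]{P}{M(q)}$ and $\Hom[\kk]{P}{A\otimes M(q)}$ respectively, the hypothesis on $P$ enters only through the tensor--hom interchange in the middle, and the explicit formula is checked by the same trace using the naturality identity $f_v(x\otimes y)=M(y)(f_q(x\otimes\id[q]))$. The only cosmetic difference is that the paper justifies the middle isomorphism by invoking the tensor-evaluation morphism $a\otimes u\mapsto(x\mapsto a\otimes u(x))$ directly (citing Ishikawa), whereas you derive its invertibility from dualisability of $P$ --- two equivalent justifications of the same canonical map.
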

\begin{proof}
  We define the required isomorphism as the composite of the natural
  isomorphisms of $\kk$-modules
  \begin{align*}
    A\otimes\Hom[Q]{P\otimes Q(-,q)}{M}&\cong A\otimes\Hom[Q]{Q(-,q)}{\Hom[\kk]{P}{M}}\\
                                       &\stackrel{\sim}{\to} A\otimes\Hom[\kk]{P}{M(q)}\\
                                       &\stackrel{\sim}{\to}\Hom[\kk]{P}{A\otimes M(q)}\\
                                       &\stackrel{\sim}{\leftarrow}\Hom[A\otimes Q]{(A\otimes Q)(-,q)}{\Hom[\kk]{P}{A\otimes M}}\\
                                       &\cong\Hom[A\otimes Q]{A\otimes P\otimes Q(-,q)}{A\otimes M}.
  \end{align*}
  Here, the third isomorphism is given by the tensor-evaluation
  morphism~\cite[Lemma~1.1]{Ish65}
  \begin{align}
    \label{eq:tensor-evaluation}
    \begin{split}
      A\otimes\Hom[\kk]{P}{M(q)}&\longrightarrow\Hom[\kk]{P}{A\otimes M(q)}\\
      a\otimes u&\longmapsto (x\mapsto a\otimes u(x)),
    \end{split}
  \end{align}
  which is an isomorphism due to the assumption that the $\kk$-module $P$ is
  finitely generated and projective. In order to obtain the explicit formula in
  the statement of the lemma, we observe that the fourth isomorphism is given by
  the inverse to the Yoneda bijection:
  \begin{align}
    \label{eq:inverse-yoneda}
    \begin{split}
      \Hom[\kk]{P}{A\otimes M(q)}&\longrightarrow\Hom[A\otimes Q]{(A\otimes Q)(-,q)}{\Hom[\kk]{P}{A\otimes M}}\\
      g&\longmapsto (b\otimes y\mapsto (x\mapsto (A\otimes M)(b\otimes y)(g(x))))_{v\in Q}.
    \end{split}
  \end{align}
  With these formulas in hand we compute, for $a\in A$ and $f\in\Hom[Q]{P\otimes
    Q(-,q)}{M}$,
  \begin{align*}
    a\otimes f&=a\otimes (x\otimes y\mapsto f_v(x\otimes y))_{v\in Q}\\
              &\mapsto a\otimes (y\mapsto(x\mapsto f_{v}(x\otimes y)))_{v\in Q}\\
              &\mapsto a\otimes(x\mapsto f_q(x\otimes\id[q]))&&\text{Yoneda bijection}\\
              &\mapsto (x\mapsto a\otimes f_q(x\otimes\id[q]))&&\eqref{eq:tensor-evaluation}\\
              &\mapsto (b\otimes y\mapsto (x\mapsto (A\otimes M)(b\otimes y)(a\otimes f_q(x\otimes\id[q])))_{v\in Q}&&\eqref{eq:inverse-yoneda}\\
              &\mapsto (b\otimes x\otimes y\mapsto (A\otimes M)(b\otimes y)(a\otimes f_q(x\otimes\id[q])))_{v\in Q}\\
              &=(b\otimes x\otimes y\mapsto ab\otimes M(y)(f_q(x\otimes\id[q]))_{v\in Q}&&\eqref{eq:right-action-A}\\
              &=(b\otimes x\otimes y\mapsto ab\otimes f_{v}(x\otimes y))_{v\in Q},&&\eqref{eq:inverse-yoneda2}
  \end{align*}
  where in the last step we use the identity
  \begin{equation}
    \label{eq:inverse-yoneda2}
    (y\mapsto(x\mapsto f_{v}(x\otimes y)))_{v\in Q}=(y\mapsto(x\mapsto
    M(y)(f_q(x\otimes\id[q]))))_{v\in Q}
  \end{equation}
  corresponding to the inverse of the Yoneda bijection
  \[
    \Hom[Q]{Q(-,q)}{\Hom[\kk]{P}{M}}\stackrel{\sim}{\longrightarrow}\Hom[\kk]{P}{M(q)},\qquad
    h\longmapsto h_q(\id[q]).
  \]
  This finishes the proof.
\end{proof}

We begin our work towards the proof of \Cref{thm:Q-shaped-bimodules} by
replacing $\dgP{S_Q(A)}$ by an isomorphic (!) dg category whose relationship to
the dg category $A\otimes\dgP{S_Q(\kk)}$ is more apparent (see
\Cref{defprop:dgG} and \Cref{prop:the-dg-functor}).

Recall the isomorphism~\eqref{eq:components-tensor-res}. For objects $q,q'\in Q$
and integers $i$ and $j$, \Cref{lemma:rho} provides an isomorphism of
$\kk$-modules
\begin{align*}
  A\otimes&\Hom[Q]{\P{S_q}[i]}{\P{S_{q'}}[j]}\\&\cong\bigoplus_{k=1}^{n_i}\bigoplus_{\ell=1}^{n_j}A\otimes\Hom[Q]{P_{q_{i,k}}\otimes Q(-,q_{i,k})}{P_{q_{j,\ell}'}\otimes Q(-,{q}_{j,\ell}')}\\
          &\stackrel{\sim}{\to}\bigoplus_{k=1}^{n_i}\bigoplus_{\ell=1}^{n_j}\Hom[A\otimes Q]{A\otimes P_{q_{i,k}}\otimes Q(-,q_{i,k})}{A\otimes P_{q_{j,\ell}'}\otimes Q(-,{q}_{j,\ell}')}\\
          &\cong\Hom[A\otimes Q]{A\otimes\P{S_q}[i]}{A\otimes\P{S_{q'}}[j]},
\end{align*}
given explicitly by the formula
\begin{equation}
  \label{eq:rho}
  \rho\colon a\otimes f\mapsto(b\otimes x\mapsto ab\otimes f_v(x))_{v\in Q}.
\end{equation}
Taking the product over $j\in\ZZ$ we obtain isomorphisms of $\kk$-modules
\begin{align*}
  \prod_{j\in\ZZ}A\otimes\Hom[Q]{\P{S_q}[j]}{\P{S_{q'}}[i+j]}&\stackrel{\sim}{\to}\prod_{j\in\ZZ}\Hom[A\otimes Q]{A\otimes\P{S_q}[j]}{A\otimes\P{S_{q'}}[i+j]}\\
                                                             &=\dgHom[A\otimes Q]{A\otimes\P{S_q}}{A\otimes\P{S_{q'}}}[i]\\
                                                             &=\dgHom[A\otimes Q]{\P{A\otimes S_q}}{\P{A\otimes S_{q'}}}[i]
\end{align*}
that assemble into an isomorphism of graded $\kk$-modules
\begin{equation}
  \label{eq:rho-graded}
  \rho\colon\coprod_{i\in\ZZ}\left(\prod_{j\in\ZZ}A\otimes\Hom[Q]{\P{S_q}[j]}{\P{S_{q'}}[i+j]}\right)\stackrel{\sim}{\longrightarrow}\dgHom[A\otimes
  Q]{\P{A\otimes S_q}}{\P{A\otimes S_{q'}}}.
\end{equation}
We endow the source of $\rho$ with the differential
\begin{align}
  \label{eq:transported-differential}
  \begin{split}
    \partial((a^j\otimes f^j))_{j\in\ZZ})
    &\coloneqq((1_A\otimes d_{\P{S_{q'}}}^{i+j})\circ(a^j\otimes f^j)\\&\phantom{====}-(-1)^i(a^{j+1}\otimes f^{j+1})\circ(1_A\otimes d_{\P{S_q}}^j))_{j\in\ZZ}\\
    &=(a^j\otimes(d_{\P{S_{q'}}}^{i+j}\circ
      f^j)-(-1)^ia^{j+1}\otimes(f^{j+1}\circ d_{\P{S_q}}^j))_{j\in\ZZ},
  \end{split}
\end{align}
where
\[
  (a^j\otimes
  f^j)_{j\in\ZZ}\in\prod_{j\in\ZZ}A\otimes\Hom[Q]{\P{S_q}[j]}{\P{S_{q'}}[i+j]}
\]
is a morphism of degree $i$.

\begin{lemma}
  \label{lemma:transported-differential}
  The following square commutes, where the vertical map on the left is defined
  by~\eqref{eq:transported-differential}:
  \[
    \begin{tikzcd}
      \prod_{j\in\ZZ}A\otimes\Hom[Q]{\P{S_q}[j]}{\P{S_{q'}}[i+j]}\rar{\rho}\dar{\partial}&\dgHom[A\otimes Q]{\P{A\otimes S_q}}{\P{A\otimes S_{q'}}}[i]\dar{\partial}\\
      \prod_{j\in\ZZ}A\otimes\Hom[Q]{\P{S_q}[j]}{\P{S_{q'}}[i+j+1]}\rar{\rho}&\dgHom[A\otimes
      Q]{\P{A\otimes S_q}}{\P{A\otimes S_{q'}}}[i+1]
    \end{tikzcd}
  \]
  Consequently, \eqref{eq:rho-graded} is an isomorphism of dg $\kk$-modules.
\end{lemma}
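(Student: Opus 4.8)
The plan is to check commutativity of the square by a direct, if bookkeeping-heavy, computation that reduces everything to the compatibility of the component-wise isomorphism~\eqref{eq:rho} with pre- and post-composition. By $\kk$-linearity of all maps involved it suffices to treat a homogeneous element given by simple tensors, say $\eta=(a^j\otimes f^j)_{j\in\ZZ}$ of degree $i$, where $a^j\in A$ and $f^j\colon\P{S_q}[j]\to\P{S_{q'}}[i+j]$ is a morphism of $Q$-modules. The $j$-th component of $\rho(\eta)$ is then $\rho(a^j\otimes f^j)$, and applying the coordinate formula~\eqref{eq:partial-coords} for the differential on the target $\dgHom[A\otimes Q]{\P{A\otimes S_q}}{\P{A\otimes S_{q'}}}$, the $j$-th component of $\partial(\rho(\eta))$ equals
\[
  d_{\P{A\otimes S_{q'}}}^{i+j}\circ\rho(a^j\otimes f^j)-(-1)^i\,\rho(a^{j+1}\otimes f^{j+1})\circ d_{\P{A\otimes S_q}}^j.
\]
Here I would immediately substitute~\eqref{eq:diff-APSq} to write both differentials as $1_A\otimes d_{\P{S_{q'}}}^{i+j}$ and $1_A\otimes d_{\P{S_q}}^j$ respectively, exhibiting them as morphisms of $(A\otimes Q)$-modules induced from morphisms of $Q$-modules.

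The key point is the pair of identities
\[
  (1_A\otimes g)\circ\rho(a\otimes f)=\rho(a\otimes(g\circ f)),\qquad\rho(a\otimes f)\circ(1_A\otimes h)=\rho(a\otimes(f\circ h)),
\]
valid for all morphisms of $Q$-modules $f,g,h$ with matching (co)domains. Each follows at once from the explicit formula~\eqref{eq:rho} by evaluating both sides on an element $b\otimes x$: since composition of $Q$-module morphisms is computed object-wise, at each object $v\in Q$ the two sides of the first identity both send $b\otimes x$ to $ab\otimes g_v(f_v(x))$, and the two sides of the second both send $b\otimes x$ to $ab\otimes f_v(h_v(x))$. Feeding these identities into the displayed expression above turns the $j$-th component of $\partial(\rho(\eta))$ into
\[
  \rho\bigl(a^j\otimes(d_{\P{S_{q'}}}^{i+j}\circ f^j)\bigr)-(-1)^i\,\rho\bigl(a^{j+1}\otimes(f^{j+1}\circ d_{\P{S_q}}^j)\bigr),
\]
which, by additivity of $\rho$, is exactly $\rho$ applied to the $j$-th component of the transported differential~\eqref{eq:transported-differential}. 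Hence $\partial\circ\rho=\rho\circ\partial$ and the square commutes.

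I expect the only genuinely delicate part to be the sign bookkeeping. One must confirm that the Koszul sign $(-1)^i$ produced by the target differential~\eqref{eq:partial-coords} is the same sign that appears in the definition~\eqref{eq:transported-differential} of the source differential, and---crucially---that passing from $d_{\P{S_q}}$ to $1_A\otimes d_{\P{S_q}}$ contributes no additional sign. The latter is precisely the content of~\eqref{eq:diff-APSq}: because $A$ sits in degree $0$, the operator $1_A\otimes(-)$ realises the functor $A\otimes-$ on $Q$-module morphisms verbatim, with no Koszul correction, so no spurious signs enter. Once commutativity is established, the final assertion is immediate: $\rho$ is already an isomorphism of graded $\kk$-modules by~\eqref{eq:rho-graded}, and its compatibility with the differentials upgrades it to an isomorphism of dg $\kk$-modules.
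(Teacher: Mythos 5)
Your proposal is correct and follows essentially the same route as the paper: both are direct element-level computations that compare the two composites using the explicit formulas~\eqref{eq:rho}, \eqref{eq:partial-coords}, \eqref{eq:diff-APSq} and~\eqref{eq:transported-differential}, with the observation that no Koszul signs arise because $A$ sits in degree $0$. The only cosmetic difference is that you factor the computation through the two compatibility identities of $\rho$ with pre- and post-composition by morphisms of the form $1_A\otimes g$, whereas the paper verifies the same identities inline by evaluating both composites on elements $b\otimes x$.
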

\begin{proof}
  On the one hand,
  \begin{align*}
    (a^j\otimes f^j)_{j\in\ZZ}&\stackrel{\rho}{\mapsto}((b\otimes x\mapsto a^jb\otimes f_v^j(x))_{v\in Q})_{j\in\ZZ}&&\eqref{eq:rho}\\
                              &\stackrel{\partial}{\mapsto}((b\otimes x\mapsto a^jb\otimes d_{\P{S_{q'}}}^{i+j}(f_v^j(x))&&\eqref{eq:partial},\ \eqref{eq:diff-APSq}\\
                              &\phantom{====}-(-1)^ia^{j+1}b\otimes f_v^{j+1}(d_{\P{S_q}}^j(x)))_{v\in Q})_{j\in\ZZ}.
  \end{align*}
  On the other hand,
  \begin{align*}
    (a^j\otimes f^j)_{j\in\ZZ}&\stackrel{\partial}{\mapsto}(a^j\otimes(d_{\P{S_{q'}}}^{i+j}\circ f^j-(-1)^ia^{j+1}\otimes f^{j+1}\circ d_{\P{S_q}}^j))_{j\in\ZZ}&&\eqref{eq:transported-differential}\\
                              &\stackrel{\rho}{\mapsto}((b\otimes x\mapsto a^jb\otimes d_{\P{S_{q'}}}^{i+j}(f_v^j(x))&&\eqref{eq:rho}\\
                              &\phantom{====}-(-1)^ia^{j+1}b\otimes f_v^{j+1}(d_{\P{S_q}}^j(x)))_{v\in Q})_{j\in\ZZ}.
  \end{align*}
  The lemma follows.
\end{proof}

Consider now the composition law given by the formula (compare
with~\eqref{eq:graded-composition})
\begin{align}
  \label{eq:transported-composition}
  \begin{split}
    (b^j\otimes g^j)_{j\in\ZZ}\circ(a^j\otimes f^j)_{j\in\ZZ}\coloneqq((b^{i+j}a^j)\otimes (g^{i+j}\circ f^j))_{j\in\ZZ},
  \end{split}  
\end{align}
where
\begin{align*}
  (a^j\otimes
  f^j)_{j\in\ZZ}&\in\prod_{j\in\ZZ}A\otimes\Hom[Q]{\P{S_q}[j]}{\P{S_{q'}}[i+j]}\intertext{and}(b^j\otimes
                  g^j)_{j\in\ZZ}&\in\prod_{j\in\ZZ}A\otimes\Hom[Q]{\P{S_{q'}}[j]}{\P{S_{q''}}[j+k]}
\end{align*}
are homogeneous elements of degree $i$ and $k$, respectively.

\begin{lemma}
  \label{lemma:transported-composition}
  The isomorphisms~\eqref{eq:rho-graded} are compatible with the composition
  law~\eqref{eq:transported-composition}.
\end{lemma}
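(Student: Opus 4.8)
The plan is to verify the multiplicativity of $\rho$ by a direct object-wise computation, in exactly the same spirit as the proof of \Cref{lemma:transported-differential}. Since both composition laws and $\rho$ itself are $\kk$-bilinear, it suffices to treat homogeneous elements whose components are pure tensors; accordingly I fix
\[
  (a^j\otimes f^j)_{j\in\ZZ}\in\prod_{j\in\ZZ}A\otimes\Hom[Q]{\P{S_q}[j]}{\P{S_{q'}}[i+j]}
\]
of degree $i$ and
\[
  (b^j\otimes g^j)_{j\in\ZZ}\in\prod_{j\in\ZZ}A\otimes\Hom[Q]{\P{S_{q'}}[j]}{\P{S_{q''}}[j+k]}
\]
of degree $k$, and aim to show that $\rho$ carries their composite~\eqref{eq:transported-composition} in the source to the composite~\eqref{eq:graded-composition} of their images in $\dgHom[A\otimes Q]{\P{A\otimes S_q}}{\P{A\otimes S_{q''}}}$.

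First I would compute the left-hand side. By~\eqref{eq:transported-composition} the composite in the source is $((b^{i+j}a^j)\otimes(g^{i+j}\circ f^j))_{j\in\ZZ}$; applying $\rho$ via~\eqref{eq:rho}, and using that composition of morphisms of $Q$-modules is computed object-wise so that $(g^{i+j}\circ f^j)_v=g_v^{i+j}\circ f_v^j$, its degree-$j$ component is the morphism of $(A\otimes Q)$-modules whose $v$-component sends $c\otimes x\mapsto (b^{i+j}a^j)c\otimes g_v^{i+j}(f_v^j(x))$.

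For the right-hand side I would instead apply $\rho$ to each factor first and then compose using~\eqref{eq:graded-composition}. The degree-$j$ component of $\rho((a^j\otimes f^j)_{j\in\ZZ})$ sends $c\otimes x\mapsto a^jc\otimes f_v^j(x)$, while the degree-$(i+j)$ component of $\rho((b^j\otimes g^j)_{j\in\ZZ})$ sends $c\otimes y\mapsto b^{i+j}c\otimes g_v^{i+j}(y)$; composing these in this order and invoking the associativity of the multiplication of $A$ gives $c\otimes x\mapsto (b^{i+j}a^j)c\otimes g_v^{i+j}(f_v^j(x))$, which is precisely the expression obtained above. The two agree, which proves the lemma.

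There is no genuine obstacle here beyond careful bookkeeping of degrees and indices; the one point worth emphasising is that \emph{no Koszul signs intervene}, precisely because $A$ is concentrated in cohomological degree $0$ (cf.~\Cref{ex:tensor-product-dg-cats-0}), so the degree-$0$ elements $a^j$ and $b^{i+j}$ may be moved past graded elements freely. This is also what forces the order of multiplication $b^{i+j}a^j$ appearing in~\eqref{eq:transported-composition}: it is dictated by the order in which the two $A$-factors are applied when the images are composed in the target dg $\kk$-module.
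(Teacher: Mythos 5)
Your proposal is correct and follows essentially the same route as the paper's own proof: compute $\rho$ of the composite via~\eqref{eq:transported-composition} and~\eqref{eq:rho}, compute the composite of the $\rho$-images via~\eqref{eq:rho} and~\eqref{eq:graded-composition} (pairing the degree-$j$ component of the first factor with the degree-$(i+j)$ component of the second, since the first is homogeneous of degree $i$), and match the two using associativity in $A$. Your added remarks on bilinearity, object-wise composition of $Q$-module morphisms, and the absence of Koszul signs are sound but only make explicit what the paper leaves implicit.
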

\begin{proof}
  Indeed,
  \begin{align*}
    \rho((b^j\otimes g^j)_{j\in\ZZ}\circ(a^j\otimes f^j)_{j\in\ZZ})&\stackrel{\eqref{eq:transported-composition}}{=}\rho(((b^{i+j}a^j)\otimes (g^{i+j}\circ f^j))_{j\in\ZZ})\\&\stackrel{\eqref{eq:rho}}{=}((c\otimes x\mapsto b^{i+j}a^jc\otimes g_v^{i+j}(f_v^j(x)))_{v\in Q})_{j\in\ZZ},
  \end{align*}
  while
  \begin{align*}
    \rho((b^j\otimes g^j)_{j\in\ZZ})\circ\rho((a^j\otimes f^j)_{j\in\ZZ})=((c\otimes x&\stackrel{\eqref{eq:rho}}{\mapsto} a^jc\otimes f_v^j(x)\\&\stackrel{\eqref{eq:rho}}{\mapsto} b^{i+j}a^jc\otimes g_v^{i+j}(f_v^j(x)))_{v\in Q})_{j\in\ZZ}
  \end{align*}
  since
  \[
    \rho((a^j\otimes f^j)_{j\in\ZZ})\in\dgHom[A\otimes Q]{\P{A\otimes
        S_q}}{\P{A\otimes S_{q'}}}[i]
  \]
  is homogeneous of degree $i$, compare also with~\eqref{eq:graded-composition}.
\end{proof}

\begin{defprop}
  \label{defprop:dgG}
  Let $\dgG{S_Q(A)}$ be the dg category with the same objects as $\dgP{S_Q(A)}$
  and whose dg $\kk$-modules of morphisms have the homogeneous components
  $(i\in\ZZ)$
  \[
    \GdgHom[A\otimes Q]{A\otimes \P{S_q}}{A\otimes
      \P{S_{q'}}}[i]\coloneqq\prod_{j\in\ZZ}A\otimes\Hom[Q]{\P{S_q}[j]}{\P{S_{q'}}[i+j]},\quad
    q,q'\in Q.
  \]
  These graded $\kk$-modules are endowed with the differential defined
  by~\eqref{eq:transported-differential} and the composition law defined
  by~\eqref{eq:transported-composition}. Then, the isomorphisms of graded
  $\kk$-modules~\eqref{eq:rho-graded} are the components of an isomorphism of dg
  categories
  \[
    \rho\colon\dgG{S_Q(A)}\stackrel{\cong}{\longrightarrow}\dgP{S_Q(A)}
  \]
  that is the identity on the corresponding sets of objects.
\end{defprop}
\begin{proof}
  \Cref{lemma:transported-differential,lemma:transported-composition} show that
  $\dgG{S_Q(A)}$ is the dg category obtained by transporting the dg category
  structure of $\dgP{S_Q(A)}$ along the isomorphisms of graded
  $\kk$-modules~\eqref{eq:rho-graded}, in particular $\dgG{S_Q(A)}$ is a
  well-defined dg category.
\end{proof}

The following result provides an unconditional relationship between the dg
categories ${A\otimes\dgP{S_Q(\kk)}}$ and $\dgP{S_Q(A)}$ defined
in~\eqref{eq:dg-stalks}.

\begin{proposition}
  \label{prop:the-dg-functor}
  Recall~\Cref{setup:Q-shaped-dercats}. With the notations above, there is a dg
  functor
  \[
    A\otimes\dgP{S_Q(\kk)}\stackrel{\varphi}\longrightarrow\dgG{S_Q(A)}\stackrel{\sim}{\longrightarrow}\dgP{S_Q(A)}
  \]
  that is a bijection on the corresponding sets of objects and whose components
  \begin{equation}
    \label{eq:graded-map}
    \varphi\colon A\otimes\dgHom[Q]{\P{S_q}}{\P{S_{q'}}}\longrightarrow\GdgHom[A\otimes Q]{\P{A\otimes S_q}}{\P{A\otimes S_{q'}}},\quad q,q'\in Q,
  \end{equation}
  are given by the formula $(i\in\ZZ)$
  \begin{equation}
    \label{eq:graded-map-formula}
    a\otimes f\longmapsto(a\otimes
    f^j)_{j\in\ZZ},\qquad a\in A,\
    f\in\dgHom[Q]{\P{S_q}}{\P{S_{q'}}}[i].
  \end{equation}
\end{proposition}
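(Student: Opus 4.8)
The plan is to define $\varphi$ by the formula~\eqref{eq:graded-map-formula} and to verify directly that it is a dg functor into $\dgG{S_Q(A)}$; since the second arrow $\rho\colon\dgG{S_Q(A)}\to\dgP{S_Q(A)}$ is already known to be an isomorphism of dg categories by~\Cref{defprop:dgG}, the composite $\rho\circ\varphi$ will then be a dg functor, and a bijection on objects because both $\varphi$ and $\rho$ are. The first observation is that, conceptually, each component~\eqref{eq:graded-map} is nothing but the canonical comparison morphism
\[
  A\otimes\prod_{j\in\ZZ}\Hom[Q]{\P{S_q}[j]}{\P{S_{q'}}[i+j]}\longrightarrow\prod_{j\in\ZZ}A\otimes\Hom[Q]{\P{S_q}[j]}{\P{S_{q'}}[i+j]}
\]
that commutes $A\otimes-$ past the product, applied in each cohomological degree $i$. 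This map is defined unconditionally---in particular with no finiteness hypothesis on $A$---so $\varphi$ is automatically a well-defined morphism of graded $\kk$-modules, and it is visibly a bijection on objects since source and target both have objects indexed by $q\in Q$.

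Next I would check that $\varphi$ is a morphism of dg $\kk$-modules. Because $A$ is concentrated in degree $0$, \Cref{ex:tensor-product-dg-cats-0} tells us that the differential on the source complex $A\otimes\dgHom[Q]{\P{S_q}}{\P{S_{q'}}}$ carries no Koszul sign and equals $1_A\otimes\partial$. Applying $\varphi$ after $1_A\otimes\partial$ and comparing with the transported differential~\eqref{eq:transported-differential} applied to $\varphi(a\otimes f)$---with the constant choice $a^j=a$ for all $j$---I expect both routes to yield the same family $(a\otimes(d_{\P{S_{q'}}}^{i+j}\circ f^j-(-1)^if^{j+1}\circ d_{\P{S_q}}^j))_{j\in\ZZ}$, via~\eqref{eq:partial-coords}. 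Hence $\varphi$ commutes with the differentials.

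Then I would verify compatibility with the composition laws and identities. For composable homogeneous morphisms $a\otimes f$ (degree $i$) and $b\otimes g$, equation~\eqref{eq:comp-tensor-product} again produces no Koszul sign, the $A$-factors lying in degree $0$, so their composite in $A\otimes\dgP{S_Q(\kk)}$ is $(ba)\otimes(g\circ f)$ with $(g\circ f)^j=g^{i+j}\circ f^j$ by~\eqref{eq:graded-composition}. On the target side, the transported composition~\eqref{eq:transported-composition} applied to $\varphi(b\otimes g)$ and $\varphi(a\otimes f)$ (with the constant choices $a^j=a$, $b^j=b$) returns the matching family $(ba\otimes(g^{i+j}\circ f^j))_{j\in\ZZ}$, while the identity $1_A\otimes\id[\P{S_q}]$ is sent to the identity of $A\otimes\P{S_q}$ in $\dgG{S_Q(A)}$. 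This establishes that $\varphi$, and therefore $\rho\circ\varphi$, is a dg functor.

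The verifications above are routine once the transported structure on $\dgG{S_Q(A)}$ is available, so the genuine content has been front-loaded into~\Cref{lemma:rho} and~\Cref{defprop:dgG}: the morphism complexes of $\dgG{S_Q(A)}$ were engineered precisely so that the tensor-past-product map becomes a dg functor. The only delicate point is the sign bookkeeping, which is tamed by the fact that $A$ sits in degree $0$. I therefore anticipate no real obstacle at this stage; the substantive question---that $\varphi$ need \emph{not} be an isomorphism, because $A\otimes-$ fails to commute with infinite products in general---is deliberately left untouched here and is settled later under the finite-generation or flatness hypotheses on $A$.
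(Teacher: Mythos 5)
Your proposal is correct and follows essentially the same route as the paper's proof: both define $\varphi$ via the canonical map commuting $A\otimes-$ past the infinite products (noting it need not be an isomorphism), and then verify compatibility with the differentials~\eqref{eq:transported-differential} and the composition law~\eqref{eq:transported-composition} by the same degree-$0$/no-Koszul-sign computations. The only cosmetic difference is that you explicitly record preservation of identities, which the paper leaves implicit.
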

\begin{proof}
  We begin by observing that the sets of objects of the dg categories
  $A\otimes\dgG{S_Q(\kk)}$ and $\dgP{S_Q(A)}$ are both in canonical bijection
  with the objects of $Q$. Consider the canonical morphisms of $\kk$-modules
  ($i\in\ZZ$)
  \begin{align}
    \label{eq:map-to-product}
    \begin{split}
      A\otimes\dgHom[Q]{\P{S_q}}{\P{S_{q'}}}[i]&=A\otimes\left(\prod_{j\in\ZZ}\Hom[Q]{\P{S_q}[j]}{\P{S_{q'}}[i+j]}\right)\\
                                               &\to\prod_{j\in\ZZ}A\otimes\Hom[Q]{\P{S_q}[j]}{\P{S_{q'}}[i+j]}\\
                                               &=\GdgHom[A\otimes Q]{\P{A\otimes S_q}}{\P{A\otimes S_{q'}}}[i].
    \end{split}
  \end{align}
  Notice that the above morphisms need not be isomorphisms since the functor
  $A\otimes-$ need not preserve infinite products of $\kk$-modules. Nonetheless,
  given that the functor $A\otimes-$ does preserve small coproducts of
  $\kk$-modules and that the algebra $A$ is concentrated in degree $0$, we
  obtain the required morphism of graded $\kk$-modules~\eqref{eq:graded-map}
  whose component of degree $i\in\ZZ$ is given by the formula
  in~\eqref{eq:graded-map-formula} for it is induced by the canonical
  projections from the product onto its factors (compare
  with~\Cref{ex:tensor-product-0}). We claim that these morphisms are in fact
  morphisms of dg $\kk$-modules. Indeed, the differential on the source of
  $\varphi$ is given by the formula (compare
  with~\Cref{ex:tensor-product-dg-cats-0})
  \begin{equation}
    \label{eq:diff-source-phi}
    (1_A\otimes\partial)\colon(a\otimes f)\longmapsto a\otimes\partial(f),\qquad
    a\in A,\ f\in\dgHom[Q]{\P{S_q}}{\P{S_{q'}}},
  \end{equation}
  where no Koszul sign appears since $A$ is concentrated in degree $0$ (compare
  with \Cref{ex:tensor-product-0}). For $a\in A$ and a homogeneous morphism
  \[
    f\in\dgHom[Q]{\P{S_q}}{\P{S_{q'}}}[i]=\prod_{j\in\ZZ}\Hom[Q]{\P{S_q}[j]}{\P{S_{q'}}[i+j]}
  \]
  of degree $i$ we compute
  \begin{align*}
    \varphi(a\otimes\partial(f))&=\varphi(a\otimes(d_{\P{S_{q'}}}\circ f-(-1)^i f\circ d_{\P{S_q}}))&&\eqref{eq:partial},\ \eqref{eq:diff-source-phi}\\
                                &=(a\otimes(d_{\P{S_{q'}}}^{i+j}\circ f^j-(-1)^if^{j+1}\circ d_{\P{S_q}}^j))_{j\in\ZZ}&&\eqref{eq:graded-map-formula}\\
                                &=(a\otimes(d_{\P{S_{q'}}}^{i+j}\circ f^j)- (-1)^ia\otimes(f^{j+1}\circ d_{\P{S_q}}^j))_{j\in\ZZ}\\
                                &=\partial((a\otimes f^j)_{j\in\ZZ})&&\eqref{eq:transported-differential}\\
                                &=\partial(\varphi(a\otimes f)).&&\eqref{eq:graded-map-formula}
  \end{align*}
  
  The morphisms \eqref{eq:graded-map} are also compatible with the corresponding
  composition laws: Given $a,b\in A$ and composable homogeneous morphisms
  \begin{align*}
    f&\in\dgHom[Q]{\P{S_q}}{\P{S_{q'}}}[i]=\prod_{j\in\ZZ}\Hom[Q]{\P{S_q}[j]}{\P{S_{q'}}[i+j]}\intertext{and}
       g&\in\dgHom[Q]{\P{S_{q'}}}{\P{S_{q''}}}[k]=\prod_{j\in\ZZ}\Hom[Q]{\P{S_{q'}}[j]}{\P{S_{q''}}[j+k]}
  \end{align*}
  with $f$ of degree $i$ we compute
  \begin{align*}
    \varphi((b\otimes g)\circ(a\otimes f))&=\varphi(ba\otimes(g\circ f))&&\eqref{eq:comp-tensor-product}\\
                                          &=\varphi(ba\otimes(g^{i+j}\circ f^j)_{j\in\ZZ})&&\eqref{eq:graded-composition}\\
                                          &=(ba\otimes(g^{i+j}\circ f^j))_{j\in\ZZ}&&\eqref{eq:graded-map-formula}\\
                                          &=((b\otimes g^{i+j})\circ (a\otimes f^j))_{j\in\ZZ}\\
                                          &=(b\otimes g^{j})_{j\in\ZZ}\circ (a\otimes f^j)_{j\in\ZZ}&&\eqref{eq:transported-composition}\\
                                          &=\varphi(b\otimes g)\circ\varphi(a\otimes f).&&\eqref{eq:graded-map-formula}
  \end{align*}
  As before, no Koszul signs arise in the previous computation since $A$ is
  concentrated in degree $0$ (compare with \Cref{ex:tensor-product-dg-cats-0}).
  
  Finally, keeping in mind the definition of the dg $\kk$-modules of morphisms
  in the tensor product of two dg categories
  (see~\eqref{eq:tensor-product-dg-k-mods}
  and~\eqref{eq:homs-in-tensor-product}), the above computations show that the
  morphisms \eqref{eq:graded-map} are the components of a dg functor
  ${\varphi\colon A\otimes\dgP{S_Q(\kk)}\to\dgG{S_Q(A)}}$.
\end{proof}

We are now ready to prove the main theorem in this article.

\begin{proof}[Proof of \Cref{thm:Q-shaped-bimodules}]
  Recall~\Cref{setup:Q-shaped-dercats}. In view of \Cref{defprop:dgG} and
  \Cref{prop:the-dg-functor}, to prove the theorem it suffices to show that the
  dg functor
  \[
    A\otimes\dgP{S_Q(\kk)}\stackrel{\varphi}{\longrightarrow}\dgG{S_Q(A)}\stackrel{\sim}{\longrightarrow}\dgP{S_Q(A)}
  \]
  constructed in the latter proposition is a quasi-isomorphism of dg categories,
  for this yields the desired equivalence of (dg enhanced) triangulated
  categories
  \[
    \DerCat{A\otimes\dgP{S_Q(\kk)}}\simeq\DerCat{\dgP{S_Q(A)}}\stackrel{\sim}{\longleftrightarrow}\DerCat<Q>{A},
  \]
  see~\eqref{eq:keller-Q}.

  Suppose first that the underlying $\kk$-module of the algebra $A$ is finitely
  generated. In this case the dg functor $\varphi$ is an isomorphism: The
  assumption that algebra $A$ is finitely generated as a $\kk$-module
  (equivalently, finitely presented since the ground commutative ring is assumed
  to be noetherian) implies that the functor $A\otimes-$ preserves small
  products (!) of $\kk$-modules\footnote{ Indeed, since infinite products of
    $\kk$-modules commute with finite products (=direct sums) of such, the
    functor $F\otimes-$ preserves infinite products for each finite-rank free
    $\kk$-module $F$; but then so does the functor $M\otimes-$ for each finitely
    presented $\kk$-module $M$ since this property is stable under the passage
    to cokernels for the tensor product functor is right exact in each variable
    separately and the infinite product of exact sequences of $\kk$-modules
    remains exact.} and hence the canonical maps~\eqref{eq:map-to-product} are
  isomorphisms.

  Suppose now that the underlying $\kk$-module of $A$ is flat but not
  necessarily finitely generated. We claim that the dg functor $\varphi$ is a
  quasi-isomorphism in this case. For this, given objects $q,q'\in Q$, we prove
  that the square below is commutative, where the quasi-isomorphisms $\psi^Q$
  and $\psi^{A\otimes Q}$ are instances of~\eqref{eq:stableHom-dg} (see
  also~\Cref{lemma:psi}):
  \begin{equation}
    \label{eq:key-square}
    \begin{tikzcd}
      A\otimes
      \dgHom[Q]{\P{S_q}}{\P{S_{q'}}}\rar{\varphi}\dar{A\otimes\psi^{Q}}&\GdgHom[A\otimes
      Q]{\P{A\otimes S_q}}{\P{A\otimes
          S_{q'}}}\dar{\psi^{A\otimes Q}}\\
      A\otimes\dgHom[Q]{\P{S_{q}}}{S_{q'}}\rar[equals]&A\otimes\dgHom[Q]{\P{S_{q}}}{S_{q'}}.
    \end{tikzcd}
  \end{equation}
  This immediately implies the claim since, by the $\kk$-flatness assumption on
  $A$, the functor $A\otimes-$ preserves quasi-isomorphisms of dg $\kk$-modules
  and hence the vertical morphism $A\otimes\psi^Q$ is a quasi-isomorphism.

  To prove that the square~\eqref{eq:key-square} commutes, we observe that,
  under the isomorphism of dg $\kk$-modules (see~\Cref{defprop:dgG})
  \[
    \rho\colon\GdgHom[A\otimes Q]{\P{A\otimes S_q}}{\P{A\otimes
        S_{q'}}}\stackrel{\sim}{\longrightarrow}\dgHom[A\otimes Q]{\P{A\otimes
        S_q}}{\P{A\otimes S_{q'}}},
  \]
  the degree $-i$ component of the morphism $\psi^{A\otimes Q}$ is given by the
  composite
  \begin{equation}
    \label{eq:transferred-psi}
    \begin{tikzcd}
      \GdgHom[A\otimes Q]{\P{A\otimes S_q}}{\P{A\otimes
          S_{q'}}}[-i]\rar{\pi_i}\dar[swap]{\psi^{A\otimes
          Q}}&A\otimes\Hom[Q]{\P{S_q}[i]}{\P{S_{q'}}[0]}\dar{1_A\otimes (p^0\circ?)}\\
      A\otimes\Hom[Q]{\P{S_q}[i]}{S_{q'}}\rar[equals]&A\otimes\Hom[Q]{\P{S_q}[i]}{S_{q'}}
    \end{tikzcd}
  \end{equation}
  More precisely, the following diagram commutes where the isomorphisms in the
  middle and bottom rows are instances of~\Cref{lemma:rho}:
  \[
    \begin{tikzcd}[column sep=small]
      \GdgHom[A\otimes Q]{\P{A\otimes S_q}}{\P{A\otimes
          S_{q'}}}\dar{\pi_i}\rar{\rho}\ar[out=180,in=180]{dd}[swap]{\psi^{A\otimes
          Q}}&\dgHom[A\otimes Q]{\P{A\otimes S_q}}{\P{A\otimes
          S_{q'}}}\dar{\pi_i}\ar[out=0,in=0]{dd}{\psi^{A\otimes Q}}\\
      A\otimes\Hom[Q]{\P{S_q}[i]}{\P{S_{q'}}[0]}\rar{\rho}\dar{1_A\otimes
        (p^0\circ?)}&\Hom[A\otimes Q]{\P{A\otimes S_q}[i]}{\P{A\otimes
          S_{q'}}[0]}\dar{(1_A\otimes p^0)\circ?}
      \\
      A\otimes\Hom[Q]{\P{S_q}[i]}{S_{q'}}\rar{\rho}&\Hom[A\otimes Q]{\P{A\otimes
          S_q}[i]}{A\otimes S_{q'}}
    \end{tikzcd}
  \]
  Indeed, for $i\in\ZZ$ and
  \[
    (a^j\otimes f^j)_{j\in\ZZ}\in\GdgHom[A\otimes Q]{\P{A\otimes
        S_q}}{\P{A\otimes
        S_{q'}}}[-i]=\prod_{j\in\ZZ}A\otimes\Hom[Q]{\P{S_q}[j]}{\P{S_{q'}}[j-i]},
  \]
  on the one hand
  \begin{align*}
    (a^j\otimes f^j)_{j\in\ZZ} &= (a^j\otimes(x \mapsto f_v^j(x))_{v\in Q})_{j\in\ZZ}\\
                               &\stackrel{\rho}{\mapsto} ((b\otimes x \mapsto a^jb\otimes f_v^j(x))_{v\in Q})_{j\in\ZZ}&&\eqref{eq:rho}\\
                               &\stackrel{\psi}{\mapsto}(b\otimes x \mapsto a^ib\otimes p^0(f_v^i(x)))_{v\in Q}),&&\eqref{eq:def-psi},\ \eqref{eq:proj-AQ}
  \end{align*}
  while on the other hand,
  \begin{align*}
    (a^j\otimes f^j)_{j\in\ZZ}  &\stackrel{\psi}{\mapsto}(1_A\otimes p^0)\circ(a^i\otimes f^i)&&\eqref{eq:transferred-psi}\\
                                &=a^i\otimes(p^0\circ f^i)\\
                                &\stackrel{\rho}{\mapsto}(b\otimes x\mapsto a^ib\otimes p^0(f_v^i(x)))_{v\in Q}.&&\eqref{eq:rho}
  \end{align*}
  
  Finally, we verify that the square~\eqref{eq:key-square} commutes. For $a\in
  A$ and a homogeneous morphism
  \[
    f\in\dgHom[Q]{\P{S_q}}{\P{S_{q'}}}[-i]=\prod_{j\in\ZZ}\Hom[Q]{\P{S_q}[j]}{\P{S_{q'}}[j-i]},
  \]
  of degree $-i$ we have
  \begin{align*}
    \psi^{A\otimes Q}(\varphi(a\otimes f))&=\psi^{A\otimes Q}((a\otimes f^j)_{j\in\ZZ})&&\eqref{eq:graded-map-formula}\\
                                          &=(1_A\otimes p^0)\circ(a\otimes f^i)&&\eqref{eq:transferred-psi}\\
                                          &=a\otimes(p^0\circ f^i)\\
                                          &=a\otimes\psi^Q(f)&&\eqref{eq:def-psi}\\
                                          &=(1_A\otimes\psi^{Q})(a\otimes f).
  \end{align*}
  This finishes the proof of the theorem.
\end{proof}

\begin{remark}
  \label{rmk:proof-of-main-thm}
  The proof of \Cref{thm:Q-shaped-bimodules} only makes use of the formal
  properties of the complete projective resolutions of the objects in the
  standard set of compact generators $S_Q(\kk)\subseteq\DerCat<Q>{\kk}$ (see
  \Cref{subsubsec:stalks}), which therefore can be replaced by a different such
  set as long as it consists of strictly perfect objects in the sense
  of~\cite[Def.~5.3]{HJ24} (see~\cite[Thm.~B]{HJ24} and compare also with the
  proof of~\cite[Thm.~A]{GHJS24} where the existence of a tilting object of
  $\DerCat<Q>{\kk}$ is leveraged).
\end{remark}

\begin{remark}
  Recall \Cref{setup:Q-shaped-dercats}. Let
  $i\colon\dgP{S_Q(\kk)}\cong\kk\otimes\dgP{S_Q(\kk)}\longrightarrow
  A\otimes\dgP{S_Q(\kk)}$ be the dg functor induced by the structure map $\kk\to
  A$. Under the identification provided by \Cref{thm:Q-shaped-bimodules}, we
  obtain the standard derived adjunction $(\mathbb{L}i_!\dashv
  i^*)$~\cite[Sec.~6]{Kel94}
  \[
    \begin{tikzcd}
      \mathbb{L}i_!\colon\DerCat<Q>{\kk}\simeq\DerCat{\dgP{S_Q(\kk)}}\rar[shift
      left]&\DerCat{A\otimes \dgP{S_Q}}\simeq\DerCat<Q>{A}\noloc i^*,\lar[shift
      left]
    \end{tikzcd}
  \]
  compare with~\cite[Notation~2.3.3 and~Prop.~3.1.4]{GHJS24} but beware of the
  discrepancy in notation. In particular, each exact triangle ${X\to Y\to Z\to
    X[1]}$ in $\DerCat<Q>{A}$ induces an exact triangle ${i^*X\to i^*Y\to
    i^*Z\to (i^*X)[1]}$ in $\DerCat<Q>{\kk}$ and hence there are long exact
  sequences of functors
  \[
    \begin{tikzcd}[column sep=tiny,row sep=small]
      \cdots\rar&\Hom[\DerCat<Q>{\kk}]{-}{i^*X}\rar\dar{\wr}&\Hom[\DerCat<Q>{\kk}]{-}{i^*Y}\rar\dar{\wr}&\Hom[\DerCat<Q>{\kk}]{-}{i^*Z}\rar\dar{\wr}&\cdots\\
      \cdots\rar&\Hom[\DerCat<Q>{A}]{\mathbb{L}i_!(-)}{X}\rar&\Hom[\DerCat<Q>{A}]{\mathbb{L}i_!(-)}{Y}\rar&\Hom[\DerCat<Q>{A}]{\mathbb{L}i_!(-)}{Z}\rar&\cdots
    \end{tikzcd}
  \]
  Similarly, each exact triangle ${L\to M\to N\to L[1]}$ in $\DerCat<Q>{\kk}$
  induces an exact triangle ${\mathbb{L}i_!L\to \mathbb{L}i_!M\to
    \mathbb{L}i_!N\to (\mathbb{L}i_!L)[1]}$ in $\DerCat<Q>{A}$ and hence there
  are long exact sequences of functors
  \[
    \begin{tikzcd}[column sep=tiny,row sep=small]
      \cdots\rar&\Hom[\DerCat<Q>{A}]{\mathbb{L}i_!N}{-}\rar\dar{\wr}&\Hom[\DerCat<Q>{A}]{\mathbb{L}i_!M}{-}\rar\dar{\wr}&\Hom[\DerCat<Q>{A}]{\mathbb{L}i_!L}{-}\rar\dar{\wr}&\cdots\\
      \cdots\rar&\Hom[\DerCat<Q>{\kk}]{N}{i^*(-)}\rar&\Hom[\DerCat<Q>{\kk}]{M}{i^*(-)}\rar&\Hom[\DerCat<Q>{\kk}]{L}{i^*(-)}\rar&\cdots;
    \end{tikzcd}
  \]
  compare with~\cite[Thms.~9 and~10]{IM15}, where
  \[
    \mathbb{H}_{M}^n(X)\coloneqq\Hom[\DerCat<Q>{A}]{\mathbb{L}i_!M}{X[n]}\cong\Hom[\DerCat<Q>{\kk}]{M}{i^*X[n]}
  \]
  is regarded as an `$M$-indexed cohomology of $X$,' as well as with the
  cohomology functors $\mathbb{H}_{[q]}^n=\mathbb{H}_{S_q}^n$ (for $n>0$)
  associated to the stalk objects
  $S_Q(\kk)\subseteq\DerCat<Q>{\kk}$~\cite[Def.~7.11]{HJ22}.
\end{remark}
 
The following example shows that in general it is unavoidable to consider higher
structures in \Cref{thm:Q-shaped-bimodules}.

\begin{example}
  Let $\kk$ be a field and $Q=\kk[\partial]/(\partial^N)$, $N\geq3$, a truncated
  polynomial algebra with non-quadratic relations. Then
  $\DerCat<Q>{\kk}=\sMod{Q}$ is the stable category of all $Q$-modules
  (see~\Cref{ex:stableQMod}) and it admits the unique simple $Q$-module $S$ as a
  compact generator. Since $Q$ is not a Koszul algebra, the (non-positive
  truncation of the) derived endomorphism algebra of $S$ cannot be
  quasi-isomorphic to its cohomology (viewed as a dg algebra with vanishing
  differential), see~\cite[Prop.~1]{Kel02a}.
\end{example}

\begin{remark}
  \label{rmk:Q-shaped-RHom}
  Let $\A$ be a small dg category. \Cref{thm:Q-shaped-bimodules} suggests to
  consider the dg category
  \[
    \RHom[c]{\dgDerCat{\A^\op}}{\dgDerCat<Q>{\kk}}\stackrel{\eqref{eq:derived-EW-theorem}}{\simeq}\dgDerCat{\A\Lotimes\dgP{S_Q(\kk)}}
  \]
  as a `$Q$-shaped derived dg category' of $\A$. Such a generalisation of the
  $Q$-shaped derived category from algebras (with many objects) to dg algebras
  (with many objects) does not seem to be immediately possible from the specific
  setup considered in~\cite{HJ22,HJ24}.
\end{remark}

\begin{remark}
  \label{rmk:Q-shaped-LuriesTP}
  \Cref{thm:Q-shaped-bimodules} also suggests the following approach to
  constructing $Q$-shaped derived categories within the framework of
  $\infty$-categories developed by Joyal~\cite{Joy02,Joyb,Joya,Joy},
  Lurie~\cite{Lur09,Lur17} and many others. Since this construction is not
  needed in the sequel, we only provide the reader with a rough sketch of our
  ideas but with enough references to the literature in order to make them
  rigorous. Let $\A$ be a small dg category. The derived $\infty$-category
  $\infDerCat{\A}$ of $\A$ is a $\kk$-linear compactly generated stable
  $\infty$-category\footnote{The case of $\kk$-linear derived
    $\infty$-categories of dg algebras is discussed in~\cite[Sec.~2.4]{Chr22};
    the apparent generalisation to small dg categories follows from the version
    of~\cite[Thm.~4.3.3.17]{Lur09} for `algebras with many objects' combined
    with~\cite[Prop.~4.3.2.5 and Cor.~4.3.2.8.]{Lur17} (see
    also~\cite[Rmk.~1.4.4.3, Prop.~4.8.3.22 and Rmk.~4.8.3.23]{Lur17}).} and
  hence it can be regarded as an object of the $\infty$-category of $\kk$-linear
  presentable stable $\infty$-categories\footnote{By `$\kk$-linear presentable
    stable $\infty$-category' we mean a presentable stable $\infty$-category
    with an action of the derived $\infty$-category $\infDerCat{\kk}$, viewed as
    a symmetric monoidal $\infty$-category with the derived tensor product over
    $\kk$, see~\cite[Thm.~7.1.2.13]{Lur17} and~\cite[Variant~D.1.5.1]{Lur18SAG}.
    Such $\infty$-categories can be regarded as enhanced variants of Neeman's
    well-generated ($\kk$-linear) triangulated categories,
    see~\cite[Prop.~A.3.7.6]{Lur09}, \cite[Prop.~1.3.4.22]{Lur17}
    and~\cite[Prop.~6.10]{Ros05}.} and $\kk$-linear colimit-preserving functors
  between them. The latter $\infty$-category is endowed with a
  closed\footnote{See for example~\cite[Sec.~4.1]{HSS17}.} symmetric monoidal
  structure
  \[
    (\C,\D)\longmapsto\LTP{\C}{\D}=\LTP{\C}[\kk]{\D}
  \]
that we refer to as \emph{Lurie's ($\kk$-linear) tensor
  product}~\cite[Sec.~D.2]{Lur18SAG}; for example, if $\A$ and $\B$ are small dg
categories, then\footnote{See~\cite[Sec.~4.3, Thms.~4.8.4.1, 4.3.2.7, 4.8.4.6,
  and~p.~738]{Lur17}.}
  \[
    \LTP{\infDerCat{\A}}{\infDerCat{\B}}\simeq\infDerCat{\A\Lotimes\B},
  \]
  where $(\A,\B)\mapsto\A\Lotimes B$ denotes the derived tensor product of dg
  categories (\Cref{subsec:dg_cats_up_to_quasi-eq} and compare also with~\Cref{rmk:Deligne-type-tensor-product}). In these terms,
  \Cref{thm:Q-shaped-bimodules} suggests to consider the $\kk$-linear compactly
  generated stable $\infty$-category
  \[
    \infDerCat<Q>{\A}\coloneqq\LTP{\infDerCat{\A}}{\infDerCat<Q>{\kk}}\simeq\infDerCat{\A\Lotimes\dgP{S_Q(\kk)}}
  \]
  as a `$Q$-shaped derived $\infty$-category of $\A$,' where
  $\infDerCat<Q>{\kk}\coloneqq\infDerCat{\dgP{S_Q(\kk)}}$ for the sake of
  concreteness. Finally, it is worth mentioning that the construction
  \[
    \C\longmapsto\infDerCat<Q>{\C}\coloneqq\LTP{\C}{\infDerCat<Q>{\kk}}
  \]
  is functorial with respect to $\kk$-linear colimit-preserving functors and
  enjoys the following properties:
  \begin{itemize}
  \item The tensor product $\LTP{\C}{\infDerCat<Q>{\kk}}$ is defined when $\C$
    is presentable but not necessarily compactly generated, for example when
    $\C=\infDerCat{\G}$ is the derived $\infty$-category of a $\kk$-linear
    Grothendieck category $\G$, see~\cite[Prop.~1.3.5.21]{Lur17}
    and~\cite[Ex~D.1.3.9]{Lur18SAG}.
  \item The construction $\C\mapsto\LTP{\C}{\infDerCat<Q>{\kk}}$ preserves
    limits\footnote{Since $\kk$-linear compactly generated stable
      $\infty$-categories are dualisable, see~\cite[Def.~4.6.1.1]{Lur17}
      and~\cite[Rmk.~D.7.7.6]{Lur18SAG} and then
      also~\cite[Lemma~4.6.1.6]{Lur17} and~\cite[Prop.~5.2.3.5]{Lur09}.} and
    colimits\footnote{Since Lurie's tensor product is closed, see
      also~\cite[Prop.~5.2.3.5]{Lur09}.} of $\kk$-linear presentable stable
    $\infty$-categories along  $\kk$-linear colimit-preserving functors.
  \item If $\C$ and $\infDerCat<Q>{\kk}$ are equipped with sufficiently `nice'
    $t$-structures,\footnote{In slightly more detail, we require that $\C$ is
      equivalent to the $\infty$-category of spectrum objects of a $\kk$-linear
      Grothendieck prestable $\infty$-category in the sense
      of~\cite[Def.~C.1.4.2]{Lur18SAG}, see also~\cite[Prop.~C.3.1.1]{Lur18SAG}.
      This assumption is satisfied, for example, by the standard $t$-structure
      on the derived category of a $\kk$-linear Grothendieck
      category~\cite[Ex.~C.1.4.5]{Lur18SAG}, and by $t$-structures generated by a
      compact silting object~\cite[Exs.~C.1.5.11 and~C.1.4.4]{Lur18SAG}.} then
    the Lurie tensor product $\LTP{\C}{\infDerCat<Q>{\kk}}$ inherits an equally
    `nice' $t$-structure~\cite[Rmks.~D.2.1.3, D.2.1.6 and~D.2.3.4, and
    Prop.~D.2.2.1]{Lur18SAG}.
  \end{itemize}
\end{remark}

\subsection{Consequences of \Cref{thm:Q-shaped-bimodules}}
\label{subsec:consequences}

We explain some consequences of \Cref{thm:Q-shaped-bimodules} that follow from
the properties of To{\"e}n's internal $\operatorname{Hom}$ functor recalled in
\Cref{subsec:Toen}.

\begin{setup}
  \label{setup:Q-shaped-dercats-flat}
  In addition to the assumptions in \Cref{setup:Q-shaped-dercats}, from now on
  we assume that the underlying $\kk$-module of the algebra $A$ is flat (this
  condition is always satisfied if $\kk$ is a field); in particular,
  \Cref{thm:Q-shaped-bimodules} applies to the $Q$-shaped derived category of
  $A$. Moreover, since the functor $A\otimes-$ preserves quasi-isomorphisms of
  dg $\kk$-modules, for every dg category $\B$ there is an
  isomorphism\footnote{Indeed, $A\Lotimes \B\coloneqq A\otimes \QQ\B$ where
    $\QQ\B\to\B$ is a cofibrant replacement of $\B$ in the Tabuada model
    structure (in particular a quasi-equivalence)~\cite[Sec.~4]{Toe07}. Since
    the functor $A\otimes-$ preserves quasi-equivalences, the induced dg functor
    $A\Lotimes\B=A\otimes \QQ\B\to A\otimes\B$ is a quasi-equivalence, hence an
    isomorphism in $\Hqe$.}
  \[
    A\Lotimes\B\cong A\otimes\B
  \]
  in $\Hqe$. Notice that if in addition the underlying $\kk$-module of $A$ is
  finitely generated, then it is projective since the ground commutative ring
  $\kk$ is assumed to be noetherian~\cite[Thm.~3.2.7]{Wei94}.
\end{setup}

Recall that, by Rickard's Theorem~\cite{Ric89,Ric91} (see
~\cite[Thm.~6.1]{Kel07} for a comprehensive statement), two $\kk$-algebras $A_1$
and $A_2$ whose underlying $\kk$-modules are flat are derived equivalent (in the
usual sense) if and only if there is an isomorphism in $\Hqe$
\[
  \dgDerCat{A_1}\cong\dgDerCat{A_2},
\]
see also~\cite[Thm.~9.2]{Kel94} and~\cite[Thm.~3.12]{Kel06} for more general
statements.

The following general result can be used to construct equivalences between
$Q$-shaped derived categories for different choices of $Q$,
see~\Cref{ex:repetitive}.

\begin{theorem}
  \label{thm:Q-shaped-derived-invariance}
  Let $Q_1$ and $Q_2$ be small $\kk$-categories and $A_1$ and $A_2$ two
  $\kk$-algebras, and assume that these satisfy the assumptions in
  \Cref{setup:Q-shaped-dercats-flat}. Suppose that there are isomorphisms in
  $\Hqe$
  \[
    \dgDerCat<Q_1>{\kk}\cong\dgDerCat<Q_2>{\kk}\qquad\text{and}\qquad\dgDerCat{A_1}\cong\dgDerCat{A_2}.
  \]
  Then, there is an isomorphism in $\Hqe$
  \[
    \dgDerCat<Q_1>{A_1}\cong\dgDerCat<Q_2>{A_2}
  \]
  and, consequently, an equivalence of triangulated categories
  \[
    \DerCat<Q_1>{A_1}\simeq\DerCat<Q_2>{A_2}.
  \]
\end{theorem}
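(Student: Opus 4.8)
The plan is to deduce the statement formally from the invariance property~\eqref{eq:-derived-invariance-bimodules} of To{\"e}n's internal $\operatorname{Hom}$ recorded in \Cref{subsec:Toen}, after using \Cref{thm:Q-shaped-bimodules} to present each $Q$-shaped derived dg category as the derived dg category of a tensor product of small dg categories. Concretely, I would first spell out the dg enhanced content of \Cref{thm:Q-shaped-bimodules}: for $i\in\{1,2\}$ the equivalence of dg enhanced triangulated categories it produces is witnessed by an isomorphism in $\Hqe$
\[
  \dgDerCat<Q_i>{A_i}\cong\dgDerCat{A_i\otimes\dgP{S_{Q_i}(\kk)}}.
\]
Since the underlying $\kk$-module of $A_i$ is flat, \Cref{setup:Q-shaped-dercats-flat} gives an isomorphism $A_i\otimes\dgP{S_{Q_i}(\kk)}\cong A_i\Lotimes\dgP{S_{Q_i}(\kk)}$ in $\Hqe$; applying the functor $\A\mapsto\dgDerCat{\A}$, which sends quasi-equivalences to quasi-equivalences and hence preserves isomorphisms in $\Hqe$, I obtain
\[
  \dgDerCat<Q_i>{A_i}\cong\dgDerCat{A_i\Lotimes\dgP{S_{Q_i}(\kk)}}\quad\text{in }\Hqe,\qquad i\in\{1,2\}.
\]

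Next I would put the two hypotheses into the shape demanded by~\eqref{eq:-derived-invariance-bimodules}, namely a pair of isomorphisms of derived dg categories in $\Hqe$. The assumption $\dgDerCat{A_1}\cong\dgDerCat{A_2}$ is already of this form, with each $A_i$ regarded as a one-object small dg category. For the second factor, Keller's Recognition Theorem~\eqref{eq:keller-Q} provides isomorphisms $\dgDerCat{\dgP{S_{Q_i}(\kk)}}\cong\dgDerCat<Q_i>{\kk}$ in $\Hqe$, so the assumption $\dgDerCat<Q_1>{\kk}\cong\dgDerCat<Q_2>{\kk}$ rewrites as an isomorphism $\dgDerCat{\dgP{S_{Q_1}(\kk)}}\cong\dgDerCat{\dgP{S_{Q_2}(\kk)}}$ in $\Hqe$. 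Feeding these two isomorphisms into~\eqref{eq:-derived-invariance-bimodules} with $\A_i=A_i$ and $\B_i=\dgP{S_{Q_i}(\kk)}$ yields
\[
  \dgDerCat{A_1\Lotimes\dgP{S_{Q_1}(\kk)}}\cong\dgDerCat{A_2\Lotimes\dgP{S_{Q_2}(\kk)}}\quad\text{in }\Hqe.
\]
Concatenating this with the two presentations from the first step gives the claimed isomorphism $\dgDerCat<Q_1>{A_1}\cong\dgDerCat<Q_2>{A_2}$ in $\Hqe$, and applying $\H[0]{-}$---which carries quasi-equivalences of pre-triangulated dg categories to triangle equivalences---produces the asserted equivalence $\DerCat<Q_1>{A_1}\simeq\DerCat<Q_2>{A_2}$.

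Since all of the substantive work is already packaged into \Cref{subsec:Toen} and \Cref{thm:Q-shaped-bimodules}, I expect no single hard step; the argument is a diagram chase in $\Hqe$. The points requiring genuine care are bookkeeping ones: matching the tensor slots correctly when invoking~\eqref{eq:-derived-invariance-bimodules} (the hypothesis on the ground categories $\DerCat<Q_i>{\kk}$ must feed the $\B_i$-slot and the hypothesis on the algebras the $\A_i$-slot), and justifying the replacement of $\Lotimes$ by $\otimes$ uniformly in $i$ via flatness. The only genuinely load-bearing external input is the functoriality and symmetry of To{\"e}n's internal $\operatorname{Hom}$ underlying~\eqref{eq:symmetry-bimods}--\eqref{eq:-derived-invariance-bimodules}, which I would cite from \Cref{subsec:Toen} rather than reprove.
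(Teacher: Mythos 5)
Your proposal is correct and follows essentially the same route as the paper: the paper's proof is precisely the chain $\dgDerCat<Q_1>{A_1}\cong\dgDerCat{A_1\otimes\dgP{S_{Q_1}(\kk)}}\cong\dgDerCat{A_2\otimes\dgP{S_{Q_2}(\kk)}}\cong\dgDerCat<Q_2>{A_2}$ obtained from \Cref{thm:Q-shaped-bimodules} and~\eqref{eq:-derived-invariance-bimodules}, followed by passage to $\H[0]{-}$. Your version merely makes explicit two bookkeeping steps the paper leaves implicit --- the identification $A_i\otimes\dgP{S_{Q_i}(\kk)}\cong A_i\Lotimes\dgP{S_{Q_i}(\kk)}$ via flatness, and the use of Keller's Recognition Theorem~\eqref{eq:keller-Q} to convert the hypothesis on $\dgDerCat<Q_i>{\kk}$ into one about $\dgDerCat{\dgP{S_{Q_i}(\kk)}}$ --- both of which are indeed needed and correctly handled.
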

\begin{proof}
  \Cref{thm:Q-shaped-bimodules} yields isomorphisms in $\Hqe$
  \[
    \dgDerCat<Q_1>{A_1}\stackrel{\ref{thm:Q-shaped-bimodules}}{\cong}\dgDerCat{A_1\otimes\dgP{S_{Q_1}(\kk)}}\stackrel{\eqref{eq:-derived-invariance-bimodules}}{\cong}\dgDerCat{A_2\otimes\dgP{S_{Q_2}(\kk)}}\stackrel{\ref{thm:Q-shaped-bimodules}}{\cong}\dgDerCat<Q_2>{A_2}.
  \]
  By passing to the corresponding $0$-th cohomology categories the required
  equivalence of triangulated categories follows.
\end{proof}

\begin{corollary}[Derived invariance]
  \label{coro:Q-shaped-derived-invariance-fixed-Q}
  Let $Q$ be a small $\kk$-category and $A_1$ and $A_2$ two $\kk$-algebras, and
  assume that these satisfy the assumptions in
  \Cref{setup:Q-shaped-dercats-flat}. Suppose that there is an isomorphism in
  $\Hqe$
  \[
    \dgDerCat{A_1}\cong\dgDerCat{A_2}.
  \]
  Then, there is an isomorphism in $\Hqe$
  \[
    \dgDerCat<Q>{A_1}\cong\dgDerCat<Q>{A_2}
  \]
  and, consequently, an equivalence of triangulated categories
  \[
    \DerCat<Q>{A_1}\simeq\DerCat<Q>{A_2}.
  \]
\end{corollary}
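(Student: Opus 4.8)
The plan is to deduce this statement directly from the preceding \Cref{thm:Q-shaped-derived-invariance} by specialising to the case $Q_1=Q_2=Q$. First I would observe that, under this specialisation, the first hypothesis of that theorem --- namely the existence of an isomorphism $\dgDerCat<Q_1>{\kk}\cong\dgDerCat<Q_2>{\kk}$ in $\Hqe$ --- reduces to the existence of an isomorphism $\dgDerCat<Q>{\kk}\cong\dgDerCat<Q>{\kk}$, which holds trivially via the identity. The second hypothesis, the existence of an isomorphism $\dgDerCat{A_1}\cong\dgDerCat{A_2}$ in $\Hqe$, is precisely the assumption of the corollary.

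With both hypotheses verified, \Cref{thm:Q-shaped-derived-invariance} immediately furnishes the desired isomorphism $\dgDerCat<Q>{A_1}\cong\dgDerCat<Q>{A_2}$ in $\Hqe$; passing to the $0$-th cohomology categories then yields the asserted equivalence of triangulated categories $\DerCat<Q>{A_1}\simeq\DerCat<Q>{A_2}$, exactly as in the proof of the theorem.

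I do not expect any genuine obstacle, since all of the substantive work has already been carried out in \Cref{thm:Q-shaped-derived-invariance} (and ultimately in \Cref{thm:Q-shaped-bimodules} together with the derived-invariance property~\eqref{eq:-derived-invariance-bimodules} of the bimodule construction). The corollary simply isolates the practically important situation in which the shape category $Q$ is held fixed while the coefficient algebra is allowed to vary within a single derived-equivalence class; its content is that forming $Q$-shaped derived categories, for a fixed $Q$, is invariant under derived equivalence of the coefficient algebras. The only point worth spelling out, were one to write the argument in full, is the trivial verification that the first hypothesis of the theorem is vacuously met under the specialisation $Q_1=Q_2=Q$.
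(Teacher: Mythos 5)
Your proposal is correct and coincides with the paper's own argument, which likewise deduces the corollary from \Cref{thm:Q-shaped-derived-invariance} by taking $Q_1=Q=Q_2$ (the first hypothesis then holding via the identity isomorphism). Nothing further is needed.
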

\begin{proof}
  Immediate from \Cref{thm:Q-shaped-derived-invariance} by taking $Q_1=Q=Q_2$.
\end{proof}

\begin{corollary}[Shapeshifting]
  \label{coro:Q-shaped-derived-invariance-fixed-A}
  Let $Q_1$ and $Q_2$ be small $\kk$-categories and $A$ a $\kk$-algebra, and
  assume that these satisfy the assumptions in
  \Cref{setup:Q-shaped-dercats-flat}. Suppose that there is an isomorphism in
  $\Hqe$
  \[
    \dgDerCat<Q_1>{\kk}\cong\dgDerCat<Q_2>{\kk}.
  \]
  Then, there is an isomorphism in $\Hqe$
  \[
    \dgDerCat<Q_1>{A}\cong\dgDerCat<Q_2>{A}
  \]
  and, consequently, an equivalence of triangulated categories
  \[
    \DerCat<Q_1>{A}\simeq\DerCat<Q_2>{A}.
  \]
\end{corollary}
\begin{proof}
  Immediate from \Cref{thm:Q-shaped-derived-invariance} by taking $A_1=A=A_2$.
\end{proof}

\begin{remark}
  \label{rmk:universal-derived-equivalences} In
  \Cref{coro:Q-shaped-derived-invariance-fixed-A}, the isomorphism
  $\dgDerCat<Q_1>{A}\cong\dgDerCat<Q_2>{A}$ in $\Hqe$ is `universal' since it is
  induced by an isomorphism $\dgDerCat<Q_1>{\kk}\cong\dgDerCat<Q_2>{\kk}$ in
  $\Hqe$, compare with \Cref{rmk:Deligne-type-tensor-product}.
\end{remark}

The following result leverages the existence of alternative compact
generators in the $Q$-shaped derived category of the ground commutative ring in
order to obtain alternative descriptions of general $Q$-shaped derived
categories.

\begin{theorem}[Change of compact generators]
  \label{thm:independence-of-generators}
  With the assumptions in \Cref{setup:Q-shaped-dercats-flat}, let
  $G\subseteq\DerCat<Q>{\kk}$ be any set of compact generators and let
  $\dgP{G}\subseteq\dgDerCat<Q>{\kk}$ be the full dg subcategory spanned by the
  objects in $G$. Then, there is an isomorphism in $\Hqe$
  \[
    \dgDerCat<Q>{A}\cong\dgDerCat{A\otimes\dgP{G}}
  \]
  and, consequently, an equivalence of triangulated categories
  \[
    \DerCat<Q>{A}\simeq\DerCat{A\otimes\dgP{G}}.
  \]
\end{theorem}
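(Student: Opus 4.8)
The plan is to reduce the statement to \Cref{thm:Q-shaped-bimodules}, which is the special case $G=S_Q(\kk)$, by comparing the two dg enhancements $\dgP{G}$ and $\dgP{S_Q(\kk)}$ of $\DerCat<Q>{\kk}$ and then transporting this comparison across the tensor product with $A$ by means of the functoriality of To{\"e}n's internal $\operatorname{Hom}$, as packaged in~\eqref{eq:-derived-invariance-bimodules}.

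First I would observe that both $S_Q(\kk)$ and $G$ are sets of compact generators of the algebraic triangulated category $\DerCat<Q>{\kk}=\H[0]{\dgDerCat<Q>{\kk}}$, with the $Q$-shaped derived dg category $\dgDerCat<Q>{\kk}$ serving as a common dg enhancement. Applying Keller's Recognition Theorem~\cite[Sec.~4.3]{Kel94} to each of these sets of generators---exactly as in the construction of $\dgP{S_Q(\kk)}$ recorded in~\eqref{eq:keller-Q}, and using that $\dgP{G}\subseteq\dgDerCat<Q>{\kk}$ is small since $G$ is a set---yields isomorphisms in $\Hqe$
\[
  \dgDerCat{\dgP{S_Q(\kk)}}\cong\dgDerCat<Q>{\kk}\cong\dgDerCat{\dgP{G}},
\]
and hence an isomorphism $\dgDerCat{\dgP{S_Q(\kk)}}\cong\dgDerCat{\dgP{G}}$ in $\Hqe$. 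I would then feed this into the derived invariance of bimodules~\eqref{eq:-derived-invariance-bimodules} with $\A_1=\A_2=A$ (and the identity isomorphism on $\dgDerCat{A}$), $\B_1=\dgP{S_Q(\kk)}$ and $\B_2=\dgP{G}$, obtaining an isomorphism in $\Hqe$
\[
  \dgDerCat{A\Lotimes\dgP{S_Q(\kk)}}\cong\dgDerCat{A\Lotimes\dgP{G}}.
\]
Because the underlying $\kk$-module of $A$ is flat, \Cref{setup:Q-shaped-dercats-flat} supplies an isomorphism $A\Lotimes\B\cong A\otimes\B$ in $\Hqe$ for every dg category $\B$, so the displayed isomorphism may be rewritten with $\otimes$ in place of $\Lotimes$; combining it with the isomorphism $\dgDerCat<Q>{A}\cong\dgDerCat{A\otimes\dgP{S_Q(\kk)}}$ of \Cref{thm:Q-shaped-bimodules} gives $\dgDerCat<Q>{A}\cong\dgDerCat{A\otimes\dgP{G}}$, and passing to $0$-th cohomology categories yields the asserted triangulated equivalence.

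The one substantive point, and the step I expect to require the most care, is the claim in the second paragraph that an \emph{arbitrary} set of compact generators of $\DerCat<Q>{\kk}$, enhanced inside $\dgDerCat<Q>{\kk}$, has derived dg category isomorphic to $\dgDerCat<Q>{\kk}$ in $\Hqe$; everything afterwards is the formal bookkeeping of~\eqref{eq:-derived-invariance-bimodules} together with the flatness reduction. This claim is precisely Keller's Recognition Theorem applied to $G$ rather than to the distinguished set $S_Q(\kk)$, and it uses no hypothesis on $A$: one needs only that the objects of $G$ remain compact and generating inside the enhancement, which are properties detected entirely at the level of the triangulated category $\H[0]{\dgDerCat<Q>{\kk}}=\DerCat<Q>{\kk}$.
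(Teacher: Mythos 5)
Your proposal is correct and takes essentially the same route as the paper: Keller's Recognition Theorem applied to $G$ as well as to $S_Q(\kk)$ to get $\dgDerCat{\dgP{S_Q(\kk)}}\cong\dgDerCat<Q>{\kk}\cong\dgDerCat{\dgP{G}}$ in $\Hqe$, then \Cref{thm:Q-shaped-bimodules} together with the functoriality of To{\"e}n's internal $\operatorname{Hom}$ and the flatness identification $A\Lotimes\B\cong A\otimes\B$. The only (cosmetic) difference is that you invoke the packaged two-variable invariance~\eqref{eq:-derived-invariance-bimodules} with the identity on $\dgDerCat{A}$, whereas the paper writes out the shorter one-variable chain directly via the derived Eilenberg--Watts Theorem~\eqref{eq:derived-EW-theorem} and functoriality; these amount to the same computation.
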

\begin{proof}
  Keller's Recognition Theorem (\Cref{subsec:Keller}) yields isomorphisms in
  $\Hqe$ (see~\eqref{eq:Kellers-recognition})
  \[
    \dgDerCat{\dgP{S_Q(\kk)}}\cong\dgDerCat<Q>{\kk}\cong\dgDerCat{\dgP{G}}.
  \]
  Consequently, in view of the functoriality of To{\"e}n's internal
  $\operatorname{Hom}$, there are isomorphisms in $\Hqe$
  \begin{align*}
    \dgDerCat<Q>{A}&\cong\dgDerCat{A\otimes\dgP{S_Q(\kk)}}&&\text{\Cref{thm:Q-shaped-bimodules}}\\
                   &\cong\dgDerCat{A\otimes\dgP{G}}.&&\eqref{eq:-derived-invariance-bimodules}
  \end{align*}
  The required equivalence of triangulated categories is obtained by passing to
  the corresponding $0$-th cohomology categories.
\end{proof}

Let $\T$ be a triangulated category with small coproducts. Recall
from~\cite[Def.~4.1]{AI12} (see also~\cite{HKM02}) that a full subcategory
$G\subseteq\T$ consisting of a set of compact generators is called a
\emph{silting subcategory} if
\[
  \forall X,Y\in G,\ \forall i>0,\qquad \T(X,\Suspension[i]{Y})=0.
\]
We say that $G\subseteq\T$ is a \emph{tilting subcategory} if moreover
\[
  \forall X,Y\in G,\ \forall i\neq0,\qquad \T(X,\Suspension[i]{Y})=0.
\]
In either case, the \emph{graded category $G^\bullet$ associated to $G$} has the
same set of objects as $G$ and the graded
$\kk$-modules of morphisms
\[
  G^\bullet(X,Y)\coloneqq\coprod_{i\in\ZZ}\T(X,\Suspension[i]{Y}),\qquad X,Y\in G,
\]
which are endowed with the apparent composition law.

\begin{corollary}[Silting generators]
  \label{thm:silting-generators}
  With the assumptions in \Cref{setup:Q-shaped-dercats-flat}, suppose that there
  exists a silting subcategory $G\subseteq\DerCat<Q>{\kk}$ and
  $\dgP{G}\subseteq\dgDerCat<Q>{\kk}$ the full dg subcategory spanned by the
  objects in $G$. Then, the $Q$-shaped category $\DerCat<Q>{A}$ admits a silting
  subcategory whose associated graded category is isomorphic to
  $A\otimes\H[\leq0]{\dgP{G}}$.
\end{corollary}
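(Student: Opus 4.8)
The plan is to deduce the statement from \Cref{thm:independence-of-generators} applied to the given set of compact generators $G\subseteq\DerCat<Q>{\kk}$. That theorem furnishes an equivalence of triangulated categories $\DerCat<Q>{A}\simeq\DerCat{A\otimes\dgP{G}}$, under which the canonical set of compact generators of $\DerCat{A\otimes\dgP{G}}$---namely the representable dg modules $\h_{(*,X)}$, one for each object $X\in G$ regarded as an object of $A\otimes\dgP{G}$ via the single object $*$ of $A$---is transported to a set of compact generators $G_A\subseteq\DerCat<Q>{A}$. I would take the full subcategory $G_A$ to be the candidate silting subcategory and verify the two required properties: the vanishing of positive extensions (so that $G_A$ is silting) and the computation of its associated graded category.

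To verify the silting condition, I would invoke the canonical isomorphism $\Hom[\DerCat{\A}]{\h_a}{\h_b[i]}\cong\H[i]{\A(a,b)}$ recalled in \Cref{subsubsec:derived-cat}. Applied to $\A=A\otimes\dgP{G}$, and using that by definition of the tensor product of dg categories (with $A$ concentrated in degree $0$) one has $(A\otimes\dgP{G})((*,X),(*,Y))=A\otimes\dgP{G}(X,Y)$, this gives
\[
  \Hom[\DerCat{A\otimes\dgP{G}}]{\h_{(*,X)}}{\h_{(*,Y)}[i]}\cong\H[i]{A\otimes\dgP{G}(X,Y)}.
\]
The flatness of the underlying $\kk$-module of $A$ (\Cref{setup:Q-shaped-dercats-flat}) makes $A\otimes-$ exact, hence it commutes with cohomology, so the right-hand side is isomorphic to $A\otimes\H[i]{\dgP{G}(X,Y)}$. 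Since $\dgP{G}$ is a full dg subcategory of the enhancement $\dgDerCat<Q>{\kk}$, we have $\H[i]{\dgP{G}(X,Y)}\cong\Hom[\DerCat<Q>{\kk}]{X}{\Suspension[i]{Y}}$, which vanishes for $i>0$ because $G$ is silting; therefore the positive extension groups between the objects of $G_A$ vanish as required.

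Finally, assembling these isomorphisms over all $i\in\ZZ$ identifies the graded $\kk$-module of morphisms of the associated graded category $G_A^\bullet$ with $\coprod_{i\in\ZZ}A\otimes\H[i]{\dgP{G}(X,Y)}\cong A\otimes\left(\coprod_{i\leq0}\H[i]{\dgP{G}(X,Y)}\right)$, where the restriction to $i\leq0$ again uses the silting hypothesis; this is by definition $A\otimes\H[\leq0]{\dgP{G}}(X,Y)$. The step I expect to require the most care is checking that this identification is an isomorphism of graded \emph{categories}, and not merely of graded $\kk$-modules---that is, that it respects the composition laws. I would deduce this from the naturality of $\Hom[\DerCat{\A}]{\h_a}{\h_b[i]}\cong\H[i]{\A(a,b)}$ with respect to the dg composition in $A\otimes\dgP{G}$, together with the multiplicativity of the exactness isomorphism $\H[i]{A\otimes(-)}\cong A\otimes\H[i]{-}$ for flat $A$. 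Once this compatibility is in place, the desired isomorphism $G_A^\bullet\cong A\otimes\H[\leq0]{\dgP{G}}$ follows, completing the proof.
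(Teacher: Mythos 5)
Your proposal is correct and follows essentially the same route as the paper: apply \Cref{thm:independence-of-generators} to get $\DerCat<Q>{A}\simeq\DerCat{A\otimes\dgP{G}}$, take the representable dg $(A\otimes\dgP{G})$-modules as the candidate silting subcategory, and use flatness of $A$ to identify the graded category they span with $A\otimes\H{\dgP{G}}=A\otimes\H[\leq0]{\dgP{G}}$. Your additional care about compatibility of the identification with composition is exactly what the paper's cited isomorphism of graded categories $\H{A\otimes\dgP{G}}\cong A\otimes\H{\dgP{G}}$ packages, so there is no substantive difference.
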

\begin{proof}
  \Cref{thm:independence-of-generators} yields an equivalence of (dg enhanced)
  triangulated categories
  \[
    \DerCat<Q>{A}\simeq\DerCat{A\otimes\dgP{G}}.
  \]
  The representable dg $(A\otimes\dgP{G})-$modules provide a set of compact
  generators of $\DerCat{A\otimes\dgP{G}}$ and, since $A$ is assumed to be flat
  as a $\kk$-module, there is an isomorphism of graded categories
  \[
    \H{A\otimes\dgP{G}}\cong A\otimes\H{\dgP{G}}=A\otimes\H[\leq0]{\dgP{G}}.
  \]
  Therefore the representable dg modules in $\DerCat{A\otimes\dgP{G}}$ form a
  silting subcategory with the required associated graded category.
\end{proof}

\begin{corollary}[Tilting generators]
  \label{thm:tilting-generators}
  With the assumptions in \Cref{setup:Q-shaped-dercats-flat}, let
  $G\subseteq\DerCat<Q>{\kk}$ be a tilting subcategory and
  $\dgP{G}\subseteq\dgDerCat<Q>{\kk}$ be the full dg subcategory spanned by the
  objects in $G$. Then, the $Q$-shaped category $\DerCat<Q>{A}$ admits a tilting
  subcategory that is isomorphic to $A\otimes G$ as a $\kk$-category. In
  particular, there is an equivalence of (dg enhanced) triangulated categories
  \[
    \DerCat<Q>{A}\simeq\DerCat{A\otimes G}.
  \]
\end{corollary}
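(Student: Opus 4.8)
The plan is to specialise \Cref{thm:silting-generators} to the tilting case, where the vanishing of negative self-extensions forces the enhancement $\dgP{G}$ to be formal in a strong sense. First I would invoke \Cref{thm:independence-of-generators} to obtain the equivalence of (dg enhanced) triangulated categories $\DerCat<Q>{A}\simeq\DerCat{A\otimes\dgP{G}}$, so that it suffices to exhibit a tilting subcategory of $\DerCat{A\otimes\dgP{G}}$ isomorphic to $A\otimes G$ as a $\kk$-category. The natural candidate is the collection of representable dg $(A\otimes\dgP{G})$-modules, which always form a set of compact generators.

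The key observation is that the tilting hypothesis on $G$ means that, for all $X,Y\in G$ and all $i\neq 0$,
\[
  \H[i]{\dgP{G}(X,Y)}\cong\Hom[\DerCat<Q>{\kk}]{X}{\Suspension[i]{Y}}=0,
\]
so that the dg category $\dgP{G}$ has cohomology concentrated in degree $0$; that is, $\H{\dgP{G}}=\H[0]{\dgP{G}}$. Using the $\kk$-flatness of $A$ (so that $A\otimes-$ preserves quasi-isomorphisms and hence commutes with the formation of cohomology of the morphism complexes), I would then deduce the isomorphism of graded $\kk$-categories
\[
  \H{A\otimes\dgP{G}}\cong A\otimes\H{\dgP{G}}=A\otimes\H[0]{\dgP{G}}\cong A\otimes G,
\]
where the last identification uses that $\H[0]{\dgP{G}}$ is canonically isomorphic to $G$ regarded as a $\kk$-category (the degree-zero morphisms recovering $\Hom[\DerCat<Q>{\kk}]{X}{Y}$). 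Since this cohomology is concentrated in degree $0$, the representable dg modules satisfy $\Hom[\DerCat{A\otimes\dgP{G}}]{\h_{(a,X)}}{\Suspension[i]{\h_{(a',Y)}}}\cong\H[i]{(A\otimes\dgP{G})((a,X),(a',Y))}=0$ for all $i\neq 0$, so they indeed form a tilting subcategory, and its endomorphism $\kk$-category is precisely $A\otimes G$.

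The main obstacle I anticipate is not conceptual but bookkeeping: I must be careful that $A\otimes-$ genuinely commutes with passing to cohomology at the level of the morphism complexes, which requires the flatness of $A$ (invoking the argument of \Cref{setup:Q-shaped-dercats-flat} that $A\otimes-$ preserves quasi-isomorphisms of dg $\kk$-modules), and that the resulting graded category is concentrated in degree $0$ precisely because of the tilting condition rather than merely the silting condition. The final sentence of the corollary, the equivalence $\DerCat<Q>{A}\simeq\DerCat{A\otimes G}$, then follows because a tilting subcategory whose enhancement is formal (cohomology in degree $0$) identifies $\DerCat{A\otimes\dgP{G}}$ with the derived category of its degree-zero cohomology category $A\otimes G$; concretely, $A\otimes\dgP{G}$ and $A\otimes G$ are connected by a quasi-isomorphism of dg categories, yielding the stated equivalence via the invariance of derived categories under quasi-equivalences.
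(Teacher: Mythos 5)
Your proposal is correct and follows essentially the same route as the paper: the paper deduces the result from \Cref{thm:silting-generators} (which itself consists of \Cref{thm:independence-of-generators} plus the flatness computation $\H{A\otimes\dgP{G}}\cong A\otimes\H{\dgP{G}}$ that you spell out), together with the observation that the tilting hypothesis forces $\H{\dgP{G}}=\H[0]{\dgP{G}}$ and hence $\dgP{G}\cong G$ in $\Hqe$ by the standard truncation zig-zag. The only cosmetic difference is that the paper applies this formality to $\dgP{G}$ before tensoring with $A$, whereas you truncate $A\otimes\dgP{G}$ directly; both are valid, since flatness of $A$ makes the two interchangeable.
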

\begin{proof}
  Immediate from~\Cref{thm:silting-generators}: Since
  $G\subseteq\DerCat<Q>{\kk}$ is a tilting subcategory,
  $\H{\dgP{G}}=\H[0]{\dgP{G}}$ and hence there are isomorphisms in
  $\Hqe$~\cite[Sec.~9]{Kel94}
  \[
    \dgP{G}\cong\H[0]{\dgP{G}}\cong G.\qedhere
  \]
\end{proof}

We conclude this section with some examples that illustrate how the previous
results can be used to recover known descriptions of $Q$-shaped derived
categories for specific choices of $Q$.

\begin{example}
  \label{ex:repetitive}
  Let $\kk$ be a field. Suppose given two basic and elementary
  finite-dimensional $\kk$-algebras $\Lambda_1$ and $\Lambda_2$ of finite global
  dimension and assume, moreover, that they are derived equivalent. Let $Q_1$
  and $Q_2$ be the repetitive $\kk$-categories in the sense
  of~\cite[Sec.~II.2]{Hap88} associated to $\Lambda_1$ and $\Lambda_2$,
  respectively. By Happel's Theorem~\cite[Thm.~II.4.9]{Hap88}, there are
  equivalences of triangulated categories
  \begin{align*}
    \DerCat<Q_1>{\kk} &= \sMod{Q_1}
                        \simeq \DerCat{\Lambda_1}
                        \simeq \DerCat{\Lambda_2}
                        \simeq \sMod{Q_2}
                        = \DerCat<Q_2>{\kk},
  \end{align*}
  see also~\Cref{ex:stableQMod}. These equivalences lift to the dg level since
  they are induced by the existence of tilting objects in each of the
  triangulated categories involved (see for example ~\cite[Thm.~8.6]{Kel07} that
  also explains why Happel's Theorem extends to unbounded derived categories).
  Consequently there is an isomorphism in $\Hqe$
  \[
    \dgDerCat<Q_1>{\kk}\cong \dgDerCat<Q_2>{\kk}
  \]
  and hence~\Cref{thm:Q-shaped-derived-invariance} applies in this case (see
  also \Cref{ex:GHJS}).
\end{example}

\begin{example}
  \label{ex:m-periodic}
  Let $\kk$ be a field and $Q$ the path $\kk$-category of the quiver $(m\geq 2)$
  \begin{center}
    \begin{tikzpicture}[scale=0.75,every node/.style={transform shape}]
      \node (0) at (0*72:1.5) {$0$}; \node (1) at (1*72:1.5) {$1$}; \node (2) at
      (2*72:1.5) {$2$}; \node (3) at (3*72:1.5) {$\cdot$}; \node (4) at
      (4*72:1.5) {$m-1$}; \draw[->] (0)--node[auto,swap]{$\partial$}(1);
      \draw[->] (1)--node[auto,swap]{$\partial$}(2); \draw[->]
      (2)--node[auto,swap]{$\partial$}(3); \draw[->]
      (3)--node[auto,swap]{$\partial$}(4); \draw[->]
      (4)--node[auto,swap]{$\partial$}(0);
    \end{tikzpicture}
  \end{center}
  subject to the relation $\partial\circ\partial=0$. Thus, $\Mod{Q}$ is the
  category of strictly $m$-periodic complexes of $\kk$-vector spaces and
  $\DerCat<Q>{\kk}=\stable{\Mod{Q}}$ is the derived category of all such,
  see~\cite[Sec.~1.5]{HJ24b} and also~\Cref{ex:stableQMod}. It is well-known and
  easy to see that there are isomorphisms in $\DerCat<Q>{\kk}$
  \[
    \Sigma^q(S_0)\cong S_q,\qquad q=0,1,\dots,m-1.
  \]
  Therefore the simple $Q$-module $S=S_0$ is a compact generator of
  $\DerCat<Q>{\kk}$ with graded endomorphism algebra
  \[
    \coprod_{i\in\ZZ}\Hom[\DerCat<Q>{\kk}]{S}{\Sigma^i(S)}\cong\kk\langle
    u^{\pm1}\rangle,\qquad |u|=m,
  \]
  the algebra of Laurent polynomials in a single variable of degree $m$, see for
  example~\cite[Sec.~5.5]{JKM22} (notice that there is an equivalence of
  categories with suspension
  \[
    \DerCat<Q>{\kk}=\sMod{Q}\simeq\prod_{p=0}^{m-1}\Mod{\kk},
  \]
  where the suspension on the right-hand side is given by cyclic permutation of
  the factors). The latter graded algebra is known to be intrinsically formal
  (see~\cite[Cor.~4.8]{Sai23} for a more general statement), which is to say
  that any dg algebra with this cohomology is formal (=quasi-isomorphic to its
  cohomology as a dg algebra). \Cref{thm:independence-of-generators} then yields
  an equivalence of (dg enhanced) triangulated categories\footnote{We regard the
    graded algebra $A\otimes\kk\langle u^{\pm1}\rangle$ as a dg algebra with
    vanishing differential and consider dg modules over it rather than cochain
    complexes of graded modules.}
  \[
    \DerCat<Q>{A}\simeq\DerCat{A\otimes\kk\langle u^{\pm1}\rangle},
  \]
  which is a well-known description of the derived category of $m$-periodic
  complexes of $A$-modules (see for example~\cite[Sec.~3.3]{Sai23}). In fact, in
  this case there is an equivalence of categories
  \[
    \dgMod*{A\otimes\kk\langle
      u^{\pm1}\rangle}\stackrel{\sim}{\longrightarrow}\Mod{A\otimes Q},
  \]
  where the left-hand side is the category of dg $(A\otimes\kk\langle
  u^{\pm1}\rangle)$-modules, see for example~\cite[Prop.~3.36]{Sai23}.
\end{example}

\begin{example}
  \label{ex:IKM}
  Under the additional assumptions in \Cref{setup:Q-shaped-dercats-flat}, the
  description of the derived category of $N$-complexes of $A$-modules $(N\geq2)$
  of Iyama--Kato--Miyachi~\cite[Thm.~F]{IKM17} can be deduced from
  \Cref{thm:tilting-generators} thanks to the existence of a tilting object in
  the derived category of $N$-complexes of $\kk$-modules~\cite[Prop.~4.7]{IKM17}
  --- which is also a main ingredient in the proof of~\cite[Thm.~F]{IKM17} (see
  also~\cite[Sec.~1.5]{HJ24b}).
\end{example}

\begin{example}
  \label{ex:GHJS}
  \Cref{thm:tilting-generators} yields an alternative\footnote{But, after all,
    equivalent, see~\Cref{rmk:proof-of-main-thm}.} proof
  of~\cite[Thm.~A]{GHJS24}, which also relies on the existence of a tilting
  object in the $Q$-shaped derived category $\DerCat<Q>{\kk}$ for the specific
  family of small $\kk$-categories $Q$ considered therein~\cite[Thm.~1.3]{Yam13}
  (in these articles the ground ring $\kk$ is a field). Indeed, if
  $T\in\DerCat<Q>{\kk}$ is Yamaura's tilting object,
  \Cref{thm:tilting-generators} yields an equivalence of triangulated categories
  \[
    \DerCat<Q>{A}\simeq\DerCat{A\otimes\Gamma},\qquad\Gamma\coloneqq\End[\DerCat<Q>{\kk}]{T},
  \]
  which is precisely the statement of~\cite[Thm.~A]{GHJS24}. We note that the
  hypotheses in~\cite[Thm.~1.3]{Yam13}, and hence also those
  in~\cite[Thm.~A]{GHJS24}, allow for $Q$ to be the repetitive $\kk$-category
  (equivalently, the trivial extension with its canonical non-negative
  grading~\cite[II.2.4, Lemma]{Hap88}) of a basic and elementary
  finite-dimensional algebra of finite global dimension~\cite[Ex.~3.15]{Yam13}
  (see also~\Cref{ex:repetitive}).
\end{example}


\subsection*{Acknowledgements}

The author wishes to thank, H.~Holm, P.~J{\o}rgensen and D.~Nkansah for useful
discussions on $Q$-shaped derived categories, H.~Holm and P.~J{\o}rgensen for bringing to his attention results in~\cite[Sec.~5]{HJ24} that are crucial to the
proof of \Cref{thm:Q-shaped-bimodules}, and P.~J{\o}rgensen for his comments on
earlier versions of this article. The author thanks Merlin Christ for the
  reference to ~\cite[Sec.~4.1]{HSS17}. Finally, the author thanks the anonymous
referee(s) for their useful comments on a previous version of this article that
lead, in particular, to the inclusion of \Cref{rmk:alternative_means}.

\printbibliography

\end{document}
